\newlist{steps}{enumerate}{1}
\setlist[steps, 1]{label = Step \arabic*:}
\newtheorem{theorem}{Theorem}[section]
\newtheorem{proposition}[theorem]{Proposition}
\newtheorem{lemma}[theorem]{Lemma}
\newtheorem{corollary}[theorem]{Corollary}
\newtheorem{conjecture}[theorem]{Conjecture}
\newtheorem{question}[theorem]{Question}
\theoremstyle{definition}
\theoremstyle{remark}
\newtheorem*{remark}{Remark}
\newcommand{\norm}[1]{\left\lVert#1\right\rVert}
\newcommand{\overbar}[1]{\mkern 1.5mu\overline{\mkern-1.5mu#1\mkern-1.5mu}\mkern 1.5mu}
\newcommand{\x}{\times}
\newcommand{\W}{\mathcal{W}}
\newcommand{\Sy}{\mathrm{Sym}}
\tikzstyle{p}+=[fill=black, circle, minimum width = 1pt, inner sep =
\tikzstyle{w}+=[fill=white, draw, circle, minimum width = 1pt, inner sep =
\begin{document}

\title{Convex graphon parameters and graph norms}

\author{Joonkyung Lee  \thanks{E-mail: {\tt joonkyung.lee@uni-hamburg.de}. Fachbereich Mathematik, Universit\"at Hamburg, Germany.
 Research supported by ERC Consolidator Grant PEPCo 724903.}
 \and Bjarne Sch\"{u}lke \thanks{E-mail: {\tt  bjarne.schuelke@uni-hamburg.de}. Fachbereich Mathematik, Universit\"at Hamburg, Germany.
 Research supported by G.I.F. Grant Agreements No. I-1358-304.6/2016.}}

\date{}

\maketitle

\begin{abstract}
Sidorenko's conjecture states that the number of copies of a bipartite graph $H$ in a graph~$G$ is asymptotically minimised when $G$ is a quasirandom graph. A notorious example where this conjecture remains open is when $H=K_{5,5}\setminus C_{10}$. It was even unknown whether this graph possesses the strictly stronger, weakly norming property.

We take a step towards understanding the graph $K_{5,5}\setminus C_{10}$ by proving that it is not weakly norming. More generally, we show that `twisted' blow-ups of cycles, which include $K_{5,5}\setminus C_{10}$ and $C_6\square K_2$, are not weakly norming. This answers two questions of Hatami.
The method relies on the analysis of Hessian matrices defined by graph homomorphisms, by using the equivalence between the (weakly) norming property and convexity of graph homomorphism densities.
We also prove that $K_{t,t}$ minus a perfect matching, proven to be weakly norming by Lov\'asz, is not norming for every~$t>3$. 
\end{abstract}

\section{Introduction}
In extremal combinatorics, quantifying quasirandomness by using a suitable norm has  been an extremely useful strategy. For instance, the main idea in the proof of the celebrated Szemer\'edi regularity lemma is to use an $L^2$-norm increment, the Gowers norms play a central role in additive combinatorics, and the cut-norm is the key concept in the theory of dense graph limits~\cite{LSz06}.

It is a natural question to ask what norms can be defined on the space of two-variable real symmetric functions on~$[0,1]^2$, which appear to be the limit objects of sequences of (weighted) large graphs. To formalise, a \emph{graphon} (resp.~\emph{signed graphon}) $W$ is a two-variable symmetric measurable function from~$[0,1]^2$ to~$[0,1]$ (resp.~$[-1,1])$. 
We consider the vector space $\W$ of two-variable symmetric bounded measurable functions on $[0,1]^2$, which contains the set of (signed) graphons as a convex subset.
Given a graph $H$ and $W\in\W$, the \emph{homomorphism density} of $H$ is defined by the functional
\begin{align*}
    t_H(W) = \int \prod_{ij\in E(H)} W(x_i,x_j) d\mu^{v(H)},
\end{align*}
where $\mu$ is the Lebesgue measure on $[0,1]$.

Let $\norm{W}_H:=|t_H(W)|^{1/e(H)}$ and let $\norm{W}_{r(H)}:=t_H(|W|)^{1/e(H)}$.
We then say that a graph $H$ is \emph{(semi-)norming} if $\norm{\cdot}_H$ defines a (semi-)norm on $\W$, and \emph{weakly norming} if $\norm{\cdot}_{r(H)}$ is a norm on~$\W$.
With this notation, we now state the following central question in the area, asked by Lov\'asz~\cite{L08} and Hatami~\cite{H10}:
\begin{question}[\cite{L08}, Problem 24]\label{q:lovasz}
What graphs $H$ are (weakly) norming?
\end{question}

A moment's thought will prove the fact that a weakly norming graph $H$ must be biparitite and that, as the name suggests, every (semi-)norming graph is weakly norming.
The particular example~$\norm{\cdot}_{C_{2k}}$, where $C_{2k}$ is the even cycle of length $2k$, is already interesting, as it corresponds to the Schatten--von Neumann norms in operator theory.

Perhaps one of the most important applications of weakly norming graphs is to Sidorenko's conjecture, a major open problem in extremal graph theory also proposed by Erd\H{o}s and Simonovits~\cite{ESi83} in a slightly different form.
\begin{conjecture}[Sidorenko's conjecture~\cite{Sid92}]
Let $H$ be a bipartite graph and let $W$ be a graphon. Then 
\begin{align}\label{eq:Sido}
    t_H(W)\geq t_{K_2}(W)^{e(H)}.
\end{align}
\end{conjecture}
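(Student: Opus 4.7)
Sidorenko's conjecture is famously open, so any proof proposal is really a roadmap of strategies that succeed in important special cases together with an identification of the obstruction in general. By positive homogeneity of the integrand, one may scale so that $t_{K_2}(W)=1$, reducing \eqref{eq:Sido} to $t_H(W)\geq 1$. For $H$ a tree, an inductive leaf-stripping argument works: if $\ell$ is a leaf with neighbour $u$, Jensen's inequality applied in the integral over $x_\ell$ yields $\int W(x_u,x_\ell)\,d\mu(x_\ell)\geq 1$ after normalising the remaining variables, and one iterates. A similar Cauchy--Schwarz scheme handles bipartite $H$ admitting a tree-like or nested-neighbourhood decomposition, which is the principal known source of Sidorenko graphs.

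A second, conceptually different attack --- and the one bearing directly on this paper --- is via the weakly norming property. If $\|\cdot\|_{r(H)}$ is a norm, then $t_H$, being its $e(H)$th power, is convex on $\W$; averaging a non-negative graphon $W$ over measure-preserving transformations of $[0,1]$ then produces the constant graphon with value $p=t_{K_2}(W)$, and Jensen yields $p^{e(H)}=t_H(\text{const}) \leq t_H(W)$, which is exactly Sidorenko. This motivates trying to prove the weakly norming property for notorious candidates such as $C_6 \square K_2$ or $K_{5,5} \setminus C_{10}$ by direct convexity or Hessian analysis of $t_H$ at the constant graphon.

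The main obstacle --- and precisely what has kept Sidorenko's conjecture open since 1992 --- is that several bipartite graphs, including $K_{5,5}\setminus C_{10}$, admit neither a tree-like Sidorenko decomposition nor (as the present paper shows) the weakly norming property. Hessian analysis at the constant graphon can therefore rule the norming route out, but it cannot close the case affirmatively, and a fundamentally new idea will be required to settle Sidorenko's conjecture for such $H$.
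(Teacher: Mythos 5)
This statement is a conjecture, not a theorem: Sidorenko's conjecture is open, and the paper states it without proof (and indeed the paper's main contribution is a \emph{negative} result showing that certain candidate graphs, notably $K_{5,5}\setminus C_{10}$, are not even weakly norming). You correctly recognise this and frame your answer as a survey of known routes and obstructions rather than a proof; that is exactly the right reading of the statement, and your second paragraph (weakly norming $\Rightarrow$ convex $\Rightarrow$ Sidorenko via averaging over measure-preserving transformations) matches Szegedy's observation that the paper cites and, in spirit, the convexity equivalence the paper establishes in Theorem~\ref{thm:main}.

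Two small cautions on the details of your sketch. First, the leaf-stripping argument for trees is not quite as you wrote it: after normalising $t_{K_2}(W)=1$ you only know $\int W(x_u,x_\ell)\,d\mu(x_\ell)=1$ \emph{on average} over $x_u$, not pointwise, so you cannot simply delete the leaf and recurse; the correct inductive proof for trees (due to Sidorenko himself, and reproved by entropy or dependent random choice methods) carries the degree function $d(x_u)=\int W(x_u,y)\,dy$ as a weight and applies Jensen at the end. Second, ``averaging over measure-preserving transformations'' needs an approximation argument (e.g.\ averaging step graphons over finite permutation groups and passing to a cut-norm limit), since the full group carries no invariant probability measure; the conclusion is nevertheless standard. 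With those caveats, your roadmap is accurate and your identification of the obstruction --- that $K_{5,5}\setminus C_{10}$ is neither tree-decomposable nor (by this paper) weakly norming --- is precisely the point the paper makes.
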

If a graph $H$ satisfies~\eqref{eq:Sido} for every graphon $W$, then we say that $H$ is \emph{Sidorenko}. 
Szegedy observed\footnote{It appeared in~\cite{H10}.} that every weakly norming graph is Sidorenko. 
Moreover, Conlon and the first author~\cite{CL16} proved that weakly norming graphs can be used as `building blocks' to construct a Sidorenko graph. On the other hand, there are Sidorenko graphs that are verified to be not weakly norming. For instance, a bipartite graph that has a vertex adjacent to all the vertices on the other side, proven to be Sidorenko by Conlon, Fox, and Sudakov~\cite{CFS10}, is not weakly norming unless it is a complete bipartite graph. 
Moreover, Kr\'al', Martins, Pach, and Wrochna~\cite{KMPW19} recently proved that  there exists an edge-transitive Sidorenko graph that is not weakly norming.

Although the weakly norming property is strictly stronger than being Sidorenko, 
partial answers to Question~\ref{q:lovasz} have also made significant progress towards Sidorenko's conjecture. Hatami~\cite{H10}, who firstly studied Question~\ref{q:lovasz}, showed that even cycles $C_{2k}$ are norming, and complete bipartite graphs $K_{m,n}$ and hypercubes $Q_{d}$ are weakly norming. Lov\'asz~\cite{L12} later proved that $K_{n,n}$ minus a perfect matching is weakly norming. Before their work, $Q_d$ and  $K_{n,n}$ minus a perfect matching were unknown to be Sidorenko. Recently, Conlon and the first author~\cite{CL16} obtained a much larger class of weakly norming graphs, which also added many new examples to the class of Sidorenko graphs that played a crucial r\^{o}le in their subsequent work~\cite{CL18}.

Despite a fair amount of recent progress~\cite{CFS10, CKLL15,CL16,CL18,H10,KLL14,LSz12,Sz15}, Sidorenko's conjecture remains open. In particular, none of the partial results succeeded in determining whether the notorious \emph{M\"obius ladder} graph  $K_{5,5}\setminus C_{10}$, suggested by Sidorenko~\cite{Sid93,Sid92}, is Sidorenko or not, although Conlon and the first author~\cite{CL18} proved that its `square' is Sidorenko. 
We make some progress in understanding this mysterious graph, by proving that it is not weakly norming. 
\begin{theorem}\label{thm:mobius}
The M\"obius ladder graph $K_{5,5}\setminus C_{10}$ is not weakly norming.
\end{theorem}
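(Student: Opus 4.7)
The plan is to invoke the equivalence between the weakly norming property and the convexity of the functional $t_H$ on the cone of nonnegative graphons: since $t_H$ is nonnegative and positively homogeneous of degree $m := e(H)$ there, $t_H^{1/m}$ is convex (and hence sublinear, giving the triangle inequality) if and only if $t_H$ itself is convex, which is equivalent to positive semi-definiteness of the Hessian
$$
Q_{W_0}(f) \;:=\; \frac{d^{2}}{dt^{2}}\bigg|_{t=0} t_H(W_0 + t f) \;=\; 2\!\sum_{\{e,e'\}\subseteq E(H),\ e\neq e'}\int f(x_e)f(x_{e'})\prod_{e''\neq e,e'} W_0(x_{e''})\,d\mu^{v(H)}
$$
at every positive graphon $W_0$. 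To prove Theorem~\ref{thm:mobius} it therefore suffices to produce one positive $W_0$ and one symmetric bounded perturbation $f \in \W$ with $Q_{W_0}(f) < 0$.

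Constant base points will not do. At $W_0 \equiv 1$ one directly computes $Q_1(f) = 2 D(H)\bigl(\int f\bigr)^{2} + 2 P(H)\int (Kf)^{2}$ with $Kf(x) = \int f(x,y)\,dy$ and $D(H), P(H)$ the numbers of disjoint and adjacent unordered edge pairs; this is manifestly nonnegative. I would therefore choose $W_0$ to be a step graphon adapted to $H$: partition $[0,1]$ into ten equal parts indexed by $V(K_{5,5}\setminus C_{10})$, and let $W_0$ take a common value on bipartite edges of $H$, another on bipartite non-edges, and a third on same-part pairs, so that $W_0$ is invariant under the natural $D_5$-symmetry of $H$. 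Restricting perturbations $f$ to step functions on the same partition---encoded by symmetric $10 \times 10$ matrices $F$---reduces $Q_{W_0}$ to a finite-dimensional quadratic form on the $F$'s, and the $D_5$-action makes this form block diagonal across the isotypic components.

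The decisive step is then to locate a \emph{twisted} perturbation $F$---supported on one of the two-dimensional irreducibles of $D_5$, so that $F_{a_i b_j}$ depends on $(i-j) \bmod 5$ through a nontrivial character of $\mathbb{Z}/5\mathbb{Z}$---for which the corresponding small block of $Q_{W_0}$ has a negative eigenvalue at some admissible choice of the parameters of $W_0$. The main obstacle is the combinatorial bookkeeping needed to assemble $Q_{W_0}$ explicitly: one must enumerate the $\binom{15}{2}$ unordered edge pairs of $K_{5,5}\setminus C_{10}$, grouped by their $\mathrm{Aut}(H)$-orbit and by their vertex-overlap pattern (disjoint or sharing a vertex), and record their contributions. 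The benefit of the $D_5$-reduction is that the final block whose signature matters is small enough to diagonalise by hand; verifying that it becomes indefinite at the chosen $W_0$ completes the proof.
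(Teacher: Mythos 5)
Your framework---characterise the weakly norming property via convexity of $t_H$ on the cone of nonnegative graphons and search for a point where the Hessian quadratic form $Q_{W_0}$ fails to be positive semidefinite---is exactly the paper's strategy (Theorem~\ref{thm:main}(i) together with Corollary~\ref{cor:psd}), and your observation that the base point $W_0\equiv 1$ is useless is correct. But the argument stops precisely where the proof has to start: you describe the \emph{shape} of a potential counterexample (a ten-part step graphon with $D_5$-symmetry, a perturbation $F$ supported on a two-dimensional irreducible of $D_5$) and then assert that verifying indefiniteness ``at some admissible choice of the parameters of $W_0$'' would finish the job, without producing the parameters, the perturbation, or the sign of the relevant eigenvalue. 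As written this is a search plan, not a proof. There is also no argument that such an $F$ and $W_0$ must exist: the $D_5$-reduction only block-diagonalises the quadratic form, it does not by itself guarantee an indefinite block, and for a strictly positive step graphon the entries you would have to estimate are sums over all $\binom{15}{2}$ edge pairs of $C_5^{\bowtie}$ with no vanishing.

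The paper's proof of Theorem~\ref{thm:bowtie} (which contains Theorem~\ref{thm:mobius} as the case $k=5$, since $C_5^{\bowtie}\cong K_{5,5}\setminus C_{10}$) is much leaner and avoids the representation theory altogether. It works with $n=3$ rather than $n=10$, and it exploits the fact that positive semidefiniteness of $\nabla^2 P_{H,n}$ on the open positive orthant extends by continuity to the closed orthant, so it may evaluate at a matrix with several zero entries. Concretely, with $x:=a_{33}$ and $y:=a_{13}$ free, it takes
\[
A_{x,y}=\left[\begin{array}{ccc} 1 & 1 & y \\ 1 & 0 & 1 \\ y & 1 & x \end{array}\right]
\]
and inspects the $2\times 2$ block of $\nabla^2 P_{H,3}$ in $x,y$ at $(x,y)=(0,0)$. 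Two short structural facts about $C_k^{\bowtie}$ for $k>4$ (Lemma~\ref{lem:bowtie}) show that the diagonal entry $q_{xx}(0,0)$ is exactly $0$ while the off-diagonal entry $\ell_{xy}(0,0)$ is strictly positive, which already forces the block to be indefinite; no eigenvalue computation or isotypic decomposition is needed. If you want to rescue your version, the lesson is to permit boundary base points with zero entries and to hunt for a $2\times 2$ principal block with a vanishing diagonal entry, rather than to diagonalise a $D_5$-isotypic block at a strictly positive $W_0$.
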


For a graph $H$, let $H^{\bowtie}$ be the graph obtained by blowing up every vertex $v$ of $H$ by an edge~$v_1v_2$ and putting two edges $u_2v_1$ and $u_1v_2$ between each pair of blown-up edges $u_1u_2$ and $v_1v_2$  
whenever~$uv\in E(H)$. The resulting graph $H^{\bowtie}$ is always a bipartite graph whose bipartite adjacency matrix is the (symmetric) adjacency matrix of $H$ plus the identity.
This blow-up was considered by Kim, Lee, and the first author~\cite{KLL14}. They observed (see Figure~\ref{fig:cycles}) that $C_5^{\bowtie}$ is isomorphic to the M\"obius ladder and, if $H$ is bipartite, $H^{\bowtie}$ is isomorphic to $H\square K_2$, where~$\square$ denotes the \emph{Cartesian product} of graphs. In particular, $C_4^{\bowtie}$ is the 3-cube graph, proven to be weakly norming by Hatami. We prove a more general result that implies Theorem~\ref{thm:mobius}.
\begin{theorem}\label{thm:bowtie}
For every $k>4$, $C_k^{\bowtie}$ is not weakly norming.
\end{theorem}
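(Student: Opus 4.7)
The strategy is to invoke the characterisation that a bipartite graph $H$ is weakly norming only if the functional $t_H$ is convex on the cone of non-negative symmetric bounded measurable functions in $\W$. Thus, to prove that $H := C_k^{\bowtie}$ is not weakly norming it suffices to exhibit a non-negative graphon $W_0$ and a symmetric $U \in \W$ for which the directional Hessian
\[
Q(W_0, U) := \frac{d^2}{dt^2} t_H(W_0 + tU)\bigg|_{t=0} = 2 \sum_{\{e,f\} \subset E(H)} \int U_e U_f \prod_{g \notin \{e, f\}} W_{0,g}\, d\mu^{v(H)}
\]
is strictly negative, where $U_e := U(x_{e_1}, x_{e_2})$ denotes the value of $U$ at the endpoints of an edge $e$ (and similarly for $W_{0, g}$).

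The first observation is that at the constant graphon $W_0 \equiv 1$ every term of $Q(1, U)$ is non-negative: vertex-disjoint pairs $\{e, f\}$ contribute $(\int U)^2$, while pairs sharing a vertex contribute $\int m(x)^2\, dx$, where $m(x) := \int U(x, y)\, dy$ is the marginal. Consequently, no witness can be found at the constant graphon, and I would therefore consider $W_0 = 1 + \eta V$ for small $\eta > 0$ and a symmetric $V \in \W$. Taylor-expanding in $\eta$ yields
\[
Q(1 + \eta V, U) = Q(1, U) + \eta\, R(U, U, V) + O(\eta^2), \qquad R(U, U, V) := 2 \sum_{\{e,f\}} \sum_{g \notin \{e,f\}} \int U_e U_f V_g\, d\mu^{v(H)}.
\]
Restricting to $U$ with vanishing marginals forces $Q(1, U) = 0$, so the sign of the Hessian for small positive $\eta$ is determined by $R(U, U, V)$.

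The decisive step is then to exploit the natural cyclic $\mathbb{Z}/k\mathbb{Z}$-action on $C_k^{\bowtie}$: I would choose $U$ and $V$ to be step functions on $[0,1]$ partitioned into $k$ equal blocks indexed by $\mathbb{Z}/k\mathbb{Z}$, with values built from characters of the cyclic group, possibly modulated by one-variable envelope functions. Equivariance under the $\mathbb{Z}/k\mathbb{Z}$-action causes distinct characters to decouple in the multilinear form $R$, reducing the analysis to finitely many computations indexed by a non-trivial character $\chi$. For each $k > 4$, the goal is then to find $\chi$ and appropriate coefficients so that $R(U, U, V) < 0$, certifying that $t_{C_k^{\bowtie}}$ fails to be convex on the non-negative cone and therefore that $C_k^{\bowtie}$ is not weakly norming.

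The main obstacle is the explicit calculation and sign verification of $R$, which requires a careful enumeration of triples of edges in $C_k^{\bowtie}$ grouped by cyclic offset and by incidence pattern (vertex-disjoint, sharing a single vertex, or forming a cherry). A critical sanity check is that the construction \emph{must} fail at $k = 4$, since $C_4^{\bowtie} = Q_3$ is weakly norming by Hatami; matching this sharp transition is the technical crux, and suggests that the negativity emerges only once the relevant cancellations on $\mathbb{Z}/k\mathbb{Z}$ are absent. I anticipate that small cases, for instance $k = 5, 6$, may demand individual verification, followed by a uniform Fourier-analytic argument for all $k \geq 7$ once the key identity is in hand.
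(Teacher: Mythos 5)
Your starting point (reduce weak norming to convexity of $t_H$ on the nonnegative cone, equivalently positive semidefiniteness of directional Hessians) is the same as the paper's Theorem~3.1 / Corollary~3.2. But from there you diverge, and the divergence is where the gap lies. You propose to test convexity near the constant graphon, observe (correctly) that the second derivative $Q(1,U)$ is always nonnegative, pass to a first-order perturbation $W_0 = 1 + \eta V$, and reduce to showing the trilinear form $R(U,U,V)$ can be made negative (in fact nonzero suffices: since one may take $\eta$ of either small sign, $R(U,U,V)\neq 0$ already contradicts convexity, and indeed for a weakly norming $H$ one must have $R(U,U,V)=0$ for every $U$ with vanishing marginals and every $V$). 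This framework is sound. The problem is that the decisive computation is not carried out: you never exhibit $U$ and $V$, never evaluate $R$, and never show that the sum over triples of edges of $C_k^{\bowtie}$ indexed by cyclic offsets fails to cancel when $k>4$ while it does cancel when $k=3,4$. You acknowledge this yourself (``the main obstacle is the explicit calculation,'' ``matching this sharp transition is the technical crux''), so the proposal is a research plan rather than a proof.

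For contrast, the paper avoids the constant-graphon route entirely. It evaluates the Hessian of the finite-dimensional polynomial $P_{H,3}$ at a single degenerate $3\times 3$ matrix with $0/1$ entries, singling out the two variables $a_{33}=x$ and $a_{13}=y$. The $2\times 2$ principal Hessian submatrix at $x=y=0$ is shown to have zero $(1,1)$-entry ($q_{xx}(0,0)=0$) and a strictly positive off-diagonal entry ($\ell_{xy}(0,0)\geq 1$); such a matrix cannot be positive semidefinite. Both facts are proved by counting $H$-homomorphisms into the weighted graph on $\{1,2,3\}$ that use the $x$-loop a prescribed number of times, and the counting is enabled by a short structural lemma about $C_k^{\bowtie}$ for $k>4$ (there is an edge contained in exactly one $4$-cycle, and any two-edge subgraph has an edge in its exterior neighbourhood); this lemma is precisely what fails for $C_3^{\bowtie}\cong K_{3,3}$ and $C_4^{\bowtie}\cong Q_3$, explaining the cutoff you were hoping to match spectrally. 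If you want to rescue your approach, you would need to (a) prove a concrete non-cancellation statement for $R$ along the lines of the paper's Lemma~3.3, and (b) verify that it respects the $k=4$ threshold; as written, neither is done, and the argument does not yet establish the theorem.
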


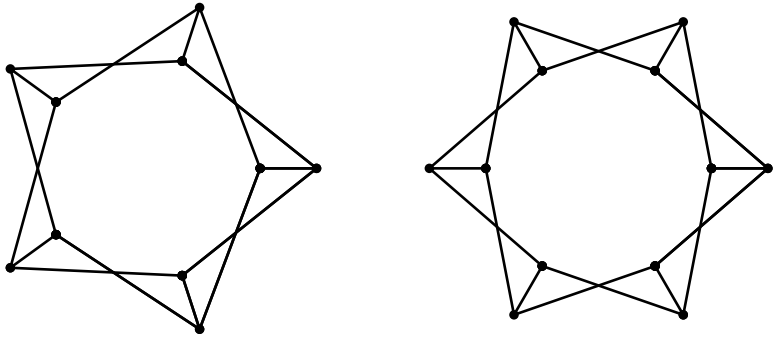
\begin{figure}
    \centering
    \begin{tikzpicture}[scale=1.5]
	
	\begin{scope}
	\foreach \x [count=\p] in {0,...,7} {
		\node[shape=circle, inner sep=0.0mm, minimum size=0.0mm, opacity=0] (\p) at (-\x*72:1.5) {};
		\fill  (\p) circle (1.2pt);};

	\foreach \x [count=\p] in {0,...,7} {
		\node[shape=circle, inner sep=0.0mm, minimum size=0.0mm, opacity=0] (\p2) at (-\x*72:1) {};
		\fill  (\p2) circle (1.2pt);};
	\foreach \x [count=\p] in {0,...,7} {
		\node[shape=circle, inner sep=0.0mm, minimum size=0.0mm, opacity=0] (\p3) at (-\x*72+72:1) {};
		\fill  (\p3) circle (1.2pt);};
	\foreach \x [count=\p] in {0,...,7} {
		\node[shape=circle, inner sep=0.0mm, minimum size=0.0mm, opacity=0] (\p4) at (-\x*72-72:1) {};
		\fill  (\p4) circle (1.2pt);};

	\foreach \x [count=\p] in {1,...,7} {
		\draw[black!100!white, line width=1pt] (\p) -- (\p2);
		\draw[black!100!white, line width=1pt] (\p) -- (\p3);
		\draw[black!100!white, line width=1pt] (\p) -- (\p4);
		};
	
	\end{scope}
	\begin{scope}[xshift=4cm]
	
	\foreach \x [count=\p] in {0,...,7} {
		\node[shape=circle, inner sep=0.0mm, minimum size=0.0mm, opacity=0] (\p) at (-\x*60:1.5) {};
		\fill  (\p) circle (1.2pt);};

	\foreach \x [count=\p] in {0,...,7} {
		\node[shape=circle, inner sep=0.0mm, minimum size=0.0mm, opacity=0] (\p2) at (-\x*60:1) {};
		\fill  (\p2) circle (1.2pt);};
	\foreach \x [count=\p] in {0,...,7} {
		\node[shape=circle, inner sep=0.0mm, minimum size=0.0mm, opacity=0] (\p3) at (-\x*60+60:1) {};
		\fill  (\p3) circle (1.2pt);};
	\foreach \x [count=\p] in {0,...,7} {
		\node[color=black, shape=circle, inner sep=0mm, minimum size=0mm, opacity=0] (\p4) at (-\x*60-60:1) {};
		\fill  (\p4) circle (1.2pt);};

	\foreach \x [count=\p] in {1,...,7} {
		\draw[black!100!white, line width=1pt] (\p) -- (\p2);
		\draw[black!100!white, line width=1pt] (\p) -- (\p3);
		\draw[black!100!white, line width=1pt] (\p) -- (\p4);
		};
	\end{scope}
	\end{tikzpicture}
    \caption{$C_5^{\bowtie}$ (the M\"obius ladder) and $C_6^{\bowtie}$.}
    \label{fig:cycles}
\end{figure}

In~\cite{H10}, Hatami asked whether two particular graphs, the M\"obius strip and $C_{2k}\square K_2$, are weakly norming.
Theorem~\ref{thm:bowtie} hence answers both questions at once. 
We remark that every $C_{2k}^{\bowtie}$ is known to be Sidorenko by~\cite{KLL14}, but except the case~$C_3^{\bowtie}\cong K_{3,3}$ it is still an open question whether every~$C_{2k+1}^{\bowtie}$ is Sidorenko or not.

\medskip

Our proof relies on determining an equivalent condition of the (weakly) norming property. A function $f$ defined on the set of graphons is a \emph{(signed-)graphon parameter} if $f(W)=f(W')$ for (signed) graphons $W$ and $W'$ for which there exists a measure-preserving bijection $\varphi:[0,1]\rightarrow[0,1]$ satisfying $W(\varphi(x),\varphi(y))=W'(x,y)$. In particular, $t_H(W)$ is always a graphon parameter for any graph $H$.

\begin{theorem}\label{thm:main}
Let $H$ be a graph. Then
\begin{enumerate}[(i)]
    \item $H$ is weakly norming if and only if $t_H(\cdot)$ is a convex graphon parameter.
    \item $H$ is norming if and only if $t_H(\cdot)$ is a strictly convex signed graphon parameter.
\end{enumerate}
\end{theorem}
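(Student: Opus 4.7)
The proof rests on the homogeneity $t_H(cW)=c^{e(H)}t_H(W)$ for $c\ge 0$, which makes both the (weakly) norming condition and convexity of $t_H$ sensitive to the same Minkowski-type inequality. For the forward direction of part~(i), assume $H$ is weakly norming. Applying the triangle inequality for $\|\cdot\|_{r(H)}$ to $\lambda U$ and $(1-\lambda)V$ with $U,V\ge 0$ and using homogeneity gives
\[
t_H(\lambda U+(1-\lambda)V)^{1/e(H)}\le \lambda\, t_H(U)^{1/e(H)}+(1-\lambda)\,t_H(V)^{1/e(H)}.
\]
Raising to the $e(H)$-th power and applying convexity of $s\mapsto s^{e(H)}$ on $[0,\infty)$ yields $t_H(\lambda U+(1-\lambda)V)\le \lambda\, t_H(U)+(1-\lambda)\,t_H(V)$, so $t_H$ is convex on graphons.

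For the reverse direction of~(i), assume $t_H$ is convex on graphons; by homogeneity the inequality extends to the cone of nonnegative functions in $\W$. The Minkowski step comes from a normalisation trick: for $U,V\ge 0$ with $a:=t_H(U)^{1/e(H)}$ and $b:=t_H(V)^{1/e(H)}$ both positive, write
\[
U+V=(a+b)\left(\frac{a}{a+b}\cdot\frac{U}{a}+\frac{b}{a+b}\cdot\frac{V}{b}\right),
\]
and apply homogeneity and then convexity at $\lambda=a/(a+b)$ to obtain $t_H(U+V)\le(a+b)^{e(H)}$. Combined with the pointwise bound $|W_1+W_2|\le |W_1|+|W_2|$ and the monotonicity of $t_H$ on the nonnegative cone, this gives the triangle inequality for $\|\cdot\|_{r(H)}$ on all of $\W$; homogeneity is immediate, and positive definiteness follows from standard arguments (for $H$ with no isolated vertices, $t_H$ applied to a nonzero nonnegative function is strictly positive after reducing to an indicator of a positive-measure set).

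Part~(ii) follows the same script, now on signed graphons and with strict inequalities throughout. The sign structure of $t_H$ on signed inputs requires care, but once one reduces to the case $\|W\|_H=t_H(W)^{1/e(H)}$ (valid whenever $t_H\ge 0$ on $\W$, which is forced for any norming $H$), the Minkowski--convexity conversion above adapts verbatim, and strict convexity of $t_H$ matches the strict Minkowski inequality that separates norming from merely weakly norming graphs. The main obstacle will be the equality-case analysis in~(ii): verifying that strict convexity of $t_H$ as a \emph{graphon parameter} (hence modulo measure-preserving bijections of $[0,1]$) coincides precisely with the strict triangle inequality for $\|\cdot\|_H$, which amounts to identifying when two signed graphons become ``parallel'' after such a rearrangement.
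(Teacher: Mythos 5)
Your treatment of part~(i) matches the paper in all essentials. The forward direction, ``weakly norming implies convex,'' via raising the triangle inequality to the $e(H)$-th power and using convexity of $s\mapsto s^{e(H)}$, is exactly the paper's Lemma~\ref{lem:composition} applied to $\norm{\cdot}_{r(H)}$. Your ``normalisation trick'' for the reverse direction is word-for-word the proof of the paper's Lemma~\ref{lem:ball}, which converts a convex unit ball into a norm. (The paper does route through a finite-dimensional discretisation $P_{H,n}$ and the counting lemma, but that is a technical convenience, not a conceptually different argument.)

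Part~(ii) is where there is a genuine gap, and you half-acknowledge it. You claim the argument ``adapts verbatim'' with strict inequalities, but the triangle inequality for a norm is \emph{not} generically strict: it is an equality whenever the two arguments are positively parallel. For a general norming $H$ you have no a~priori information about when equality occurs, so you cannot simply read off strict convexity from a ``strict Minkowski inequality.'' You do flag this as ``the main obstacle,'' but leave it unresolved. The paper bypasses it entirely by using Hatami's Clarkson-type inequality $t_H(U+W)+t_H(U-W)\le 2^{e(H)-1}(t_H(U)+t_H(W))$ together with Lemma~\ref{lem:norming} (a norming $H$ gives $t_H(U-W)>0$ whenever $U\neq W$); this immediately gives strict midpoint convexity, hence strict convexity. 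Your sketch never invokes this inequality and has no substitute for it. Separately, for the direction ``strictly convex~$\Rightarrow$~norming,'' positive definiteness of $\norm{\cdot}_H$ on \emph{signed} graphons does not follow from the indicator-function argument you cite---that argument only handles nonnegative inputs. The paper instead deduces from strict convexity of $P_{H,n}$ that $e(H)$ must be even (else $P_{H,n}(A)+P_{H,n}(-A)=0$ gives a contradiction at the midpoint $0$) and then that $2P_{H,n}(A)=P_{H,n}(A)+P_{H,n}(-A)>2P_{H,n}(0)=0$ for $A\neq 0$, which is the step your proposal is missing.
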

By using Theorem~\ref{thm:main}(ii), we also prove that $K_{t,t}$ minus a perfect matching, proven to be weakly norming by Lov\'asz, is not norming if $t>3$.
\begin{theorem}\label{thm:K55}
    For every $t>3$, $K_{t,t}$ minus a perfect matching is not norming. 
\end{theorem}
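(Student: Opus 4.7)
The plan is to apply Theorem~\ref{thm:main}(ii) and, for each $t>3$, produce a nonzero signed graphon $W$ with $t_{H_t}(W)=0$, where $H_t:=K_{t,t}\setminus M$. Any such $W$ makes $\|W\|_{H_t}=|t_{H_t}(W)|^{1/e(H_t)}=0$ while $W\not\equiv 0$, so $\|\cdot\|_{H_t}$ fails positive definiteness and $H_t$ is not norming; equivalently, $t_{H_t}$ fails to be strictly convex on signed graphons.

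The \emph{even} case $t\in\{4,6,8,\ldots\}$ admits a clean rank-one construction. Let $\phi\colon[0,1]\to\{-1,+1\}$ be balanced, so $\int_0^1\phi=0$, and set $W(x,y)=\phi(x)\phi(y)$. Each vertex of $H_t$ has degree $t-1$, so
\[
 t_{H_t}(W)\;=\;\int\prod_{i\neq j}\phi(x_i)\phi(y_j)\,dx\,dy\;=\;\Bigl(\textstyle\int_0^1\phi^{t-1}\Bigr)^{2t}.
\]
With $t$ even, $t-1$ is odd and $\int\phi^{t-1}=\int\phi=0$, so $t_{H_t}(W)=0$ as required.

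The \emph{odd} case $t\geq 5$ is the main difficulty, because the same rank-one recipe gives $(\int\phi^{t-1})^{2t}>0$. I would search for $W$ in a higher-rank family built from the characters of $\mathbb F_2^2$: realize $\phi_1,\phi_2\colon[0,1]\to\{\pm 1\}$ as independent balanced functions on a $4$-block partition of $[0,1]$, set $\phi_3=\phi_1\phi_2$, and consider
\[
 W\;=\;\alpha_0\,\mathbf 1\otimes\mathbf 1\;+\;\sum_{k=1}^{3}\alpha_k\,\phi_k\otimes\phi_k.
\]
Character orthogonality reduces $t_{H_t}(W)$ to a weighted sum over edge-colourings $c\colon E(H_t)\to\{0,1,2,3\}$ whose colour classes at each vertex satisfy explicit parity conditions (either all four local degrees are even, or all four are odd). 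This produces a polynomial in $(\alpha_0,\alpha_1,\alpha_2,\alpha_3)$ whose coefficients are combinatorial invariants of the cycle space of $H_t$, and the task becomes exhibiting a nontrivial real root.

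The hardest part is this last step: proving the polynomial actually has a nontrivial real zero whenever $t\geq 5$ is odd. This is precisely where $t>3$ is used. For $t=3$ the graph $H_3=C_6$ has a one-dimensional cycle space, the ``all-odd'' case is empty (four odd local degrees cannot sum to $t-1=2$), only the four constant colourings are admissible, and the polynomial collapses to $\alpha_0^6+\alpha_1^6+\alpha_2^6+\alpha_3^6$, which is manifestly positive definite — in agreement with $C_6$ being norming. For $t\geq 5$ the cycle space of $K_{t,t}\setminus M$ has dimension $t^2-3t+1$, and the all-odd case becomes admissible (since $t-1\geq 4$ is even); the resulting new terms, coming from proper $(t-1)$-edge-colourings and their degree variants, should cancel against the even-subgraph contributions under an appropriate sign choice of the $\alpha_k$. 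A uniform fallback is the Hessian analysis underlying the proof of Theorem~\ref{thm:bowtie}: since $t_{H_t}$ is convex on signed graphons (from the weakly norming property of Lov\'asz), strict convexity of this polynomial functional is equivalent to the non-existence of a direction killed by the Hessian at every signed graphon, and the character construction above is precisely engineered to supply such a universal null direction when $t\geq 4$.
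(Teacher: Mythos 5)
Your plan is conceptually sound in outline (by Lemma~\ref{lem:norming}, a nonzero signed graphon $W$ with $t_{H}(W)=0$ would show $H$ is not norming), and your even-$t$ rank-one construction is correct and clean. But the even case is already free: for even $t$ every vertex of $K_{t,t}\setminus M$ has odd degree $t-1$, so the graph is non-eulerian and hence not norming by Hatami's observation. The only case that actually requires an argument is odd $t\ge 5$, and here the proposal is not a proof. You write ``I would search for $W$ in a higher-rank family\ldots'', ``should cancel against the even-subgraph contributions under an appropriate sign choice'', and ``the task becomes exhibiting a nontrivial real root'' --- but no concrete $W$, no sign choice of the $\alpha_k$, and no verification of the cancellation is given. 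The reduction to a polynomial in $(\alpha_0,\ldots,\alpha_3)$ is plausible, but the existence of a nontrivial real zero is precisely the entire content of the theorem for odd $t$, and you explicitly leave it open.

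The ``uniform fallback'' is also incorrect as stated. You assert that strict convexity of $P_{H,n}$ ``is equivalent to the non-existence of a direction killed by the Hessian at every signed graphon.'' This is false: positive definiteness of the Hessian implies strict convexity, but not conversely (the paper makes this point explicitly after Theorem~\ref{thm:eqv_norming}), and Proposition~\ref{prop:pdhessian} shows that \emph{every} norming graph already has a singular Hessian $\nabla^2 P_{H,2n}(A)$ at some nonzero $A$. So exhibiting a null direction of a Hessian cannot by itself distinguish norming from non-norming. What actually works, and what the paper does, is to find a point $A\in\Sy_3$ (depending on a small parameter $\varepsilon$) at which a $2\times 2$ principal submatrix of $\nabla^2 P_{H,3}(A)$ is \emph{not positive semidefinite}; this contradicts Corollary~\ref{cor:psd_norming}. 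The delicate part there is a switching/involution argument showing that the coefficient of $\varepsilon^{2t-2}y^2$ in $r(y)$ vanishes while $\ell_{xy}(0,0)=\Omega(\varepsilon^{4t-3})$, so the determinant of the Hessian minor is negative for small $\varepsilon$. Your proposal has no counterpart to this computation, so the gap is genuine.
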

As observed by Hatami~\cite[Observation 2.5(ii)]{H10}, every norming graph must be \emph{eulerian}, i.e., every vertex has even degree. Thus, we only prove Theorem~\ref{thm:K55} for odd integers $t$, which gives the first examples of weakly norming graphs that are eulerian but not norming.

\section{Preliminaries}
Given an $n\times n$ symmetric matrix $A=(a_{ij})$, let $U_A$ be the two-variable symmetric step function on~$[0,1]^2$ defined by 
\begin{align*}
    U_{A}(x,y) = a_{ij}, \text{ if } (i-1)/n\leq x<i/n \text{ and } (j-1)/n\leq y<j/n
\end{align*}
and $U_A=0$ on the measure-zero set $x=1$ or $y=1$ for simplicity.
Trivially, $A\mapsto U_A$ is a linear map and $U_A$ satisfies the identity
\begin{align*}
    t_H(U_A) = n^{-v(H)} \sum_{\phi:V(H)\rightarrow [n]} \prod_{uv\in E(H)}a_{\phi(u)\phi(v)}.
\end{align*}
In other words, $t_H(U_A)$ is $n^{-v(H)}$ times a homogeneous $\binom{n+1}{2}$-variable polynomial of degree $e(H)$. We call the polynomial $P_{H,n}(A)$ for $A\in \Sy_n$, where $\Sy_n$ denotes the $\binom{n+1}{2}$-dimensional vector space of $n\times n$ real symmetric matrices. 

\medskip

The \emph{cut norm} $\norm{\cdot}_{\square}$ on $\W$ is defined by
\begin{align*}
    \norm{W}_\square := \sup_{S,T\subseteq [0,1]}\left\vert\int_{S\times T} W(x,y)dx dy\right\vert.
\end{align*}
Then the corresponding counting lemma is stated as follows:
\begin{lemma}[\cite{L12}, Exercise~10.28]\label{lem:counting}
Let $U$ and $W$ be signed graphons and let $H$ be a graph. Then
\begin{align*}
    |t_H(U) - t_H(W)|\leq 4e(H)\norm{U-W}_\square.
\end{align*}
\end{lemma}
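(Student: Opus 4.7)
The plan is a standard telescoping (or hybridisation) argument that swaps one edge's kernel at a time. I would enumerate the edges of $H$ as $e_1,\ldots,e_m$ with $m=e(H)$, and for $0\le i\le m$ define $T_i$ to be the integral over $[0,1]^{v(H)}$ of the product that uses $U(x_u,x_v)$ for each $e_j=uv$ with $j\le i$ and $W(x_u,x_v)$ for each $e_j$ with $j>i$. Then $T_0=t_H(W)$ and $T_m=t_H(U)$, so the triangle inequality yields $\abs{t_H(U)-t_H(W)}\le\sum_{i=1}^{m}\abs{T_i-T_{i-1}}$, and it suffices to show $\abs{T_i-T_{i-1}}\le 4\norm{U-W}_\square$ for each $i$.

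For a fixed $i$, write $e_i=ab$. Since $H$ is simple, no other edge joins $a$ and $b$, so after freezing the variables $x_c$ for $c\ne a,b$, the contribution of all edges other than $e_i$ should factorise as $u(x_a)\,v(x_b)\,c$, where $u(x_a)$ collects the factors from the remaining edges incident to $a$, $v(x_b)$ collects those from the remaining edges incident to $b$, and $c$ is a constant (in $x_a,x_b$) coming from edges incident to neither. Since $\abs{U},\abs{W}\le 1$, one has $\abs{u},\abs{v},\abs{c}\le 1$ pointwise. Integrating out the frozen variables (whose total contribution has modulus at most $1$) then reduces the task to bounding $\abs{\int (U-W)(x,y)\,u(x)\,v(y)\,dx\,dy}$ by $4\norm{U-W}_\square$ uniformly in measurable $u,v\colon[0,1]\to[-1,1]$.

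For the remaining inequality I would use the standard equivalence between the cut norm and the $L^\infty\to L^1$ operator norm. Decompose $u=u^+-u^-$ and $v=v^+-v^-$ into non-negative parts bounded by $1$, and apply the layer-cake identity $u^\pm(x)=\int_0^1 \mathbf{1}[u^\pm(x)\ge\alpha]\,d\alpha$ (and similarly for $v^\pm$). For each of the four sign combinations, the integral becomes a double average
\begin{align*}
\int_0^1\!\int_0^1\int_{S_\alpha\times T_\beta}(U-W)(x,y)\,dx\,dy\,d\alpha\,d\beta,
\end{align*}
where $S_\alpha=\{u^\pm\ge\alpha\}$ and $T_\beta=\{v^\pm\ge\beta\}$, so that each inner rectangle integral has absolute value at most $\norm{U-W}_\square$. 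Summing the four sign combinations produces the factor of $4$, and summing the $m$ telescoping terms yields the claimed bound. The main conceptual step, and the only one that uses simplicity of $H$, is the factorisation after freezing: if $a$ and $b$ were joined by more than one edge, the integrand could not be split as a function of $x_a$ times a function of $x_b$, and the reduction to a product of scalar tests against the cut norm would break down.
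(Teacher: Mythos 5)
Your proof is correct and is the standard hybridisation argument for this counting lemma, which is essentially the proof one finds in Lov\'asz's book (the paper itself only cites the exercise). The one place worth being precise, and which you handle properly, is that the inner bound $\left|\int (U-W)(x,y)\,u(x)\,v(y)\,dx\,dy\right|\le 4\norm{U-W}_\square$ must hold uniformly over the frozen variables, since $u$, $v$, and the scalar $c$ all depend on them; because $|c|\le 1$ pointwise, integrating the frozen variables out preserves the bound, and the factor $4$ from the four sign combinations of $u^\pm v^\pm$ together with the $m=e(H)$ telescoping steps gives the claimed $4e(H)\norm{U-W}_\square$.
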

The following lemma, which connects a (signed) graphon $W$ to a step function of the form $U_A$, is an easy consequence of the fact $\norm{W}_{\square}\leq \norm{W}_1$ and the dominated convergence theorem.
\begin{lemma}\label{lem:reg}
Let $W$ be a signed graphon. For every $\varepsilon>0$, there exists a symmetric matrix $A$ such that
$\norm{W-U_A}_\square <\varepsilon$.
\end{lemma}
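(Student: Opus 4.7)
The plan is to take $A$ to be the matrix of block averages of $W$ over a sufficiently fine regular partition of $[0,1]^2$, and invoke the hint: since $\norm{\cdot}_\square \leq \norm{\cdot}_1$ on $\W$, it suffices to show that $U_A$ can be made arbitrarily close to $W$ in $L^1$. The inequality itself is immediate, since for every measurable $S,T\subseteq[0,1]$,
\[
    \left|\int_{S\times T} W\right|\leq \int_{S\times T}|W|\leq \norm{W}_1.
\]

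For each positive integer $n$ and $i,j\in[n]$, I would set
\[
    (A_n)_{ij}:=n^2\int_{I_i\times I_j} W(x,y)\,dx\,dy,\qquad I_k:=[(k-1)/n,k/n),
\]
so that $A_n$ is symmetric (since $W$ is) and $U_{A_n}$ is constant on each rectangle $I_i\times I_j$ with value equal to the average of $W$ on that rectangle. Equivalently, $U_{A_n}$ is the conditional expectation of $W$ with respect to the $\sigma$-algebra generated by the product partition $\{I_i\times I_j\}_{i,j\in[n]}$. Since $|W|\leq 1$ as $W$ is a signed graphon, averaging preserves this bound, so $|U_{A_n}|\leq 1$ everywhere.

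By the Lebesgue differentiation theorem (or equivalently martingale convergence applied along the dyadic filtration $n=2^k$), one has $U_{A_n}(x,y)\to W(x,y)$ for almost every $(x,y)\in[0,1]^2$. Combined with the uniform bound $|U_{A_n}|\leq 1$, the dominated convergence theorem then yields $\norm{W-U_{A_n}}_1\to 0$, and hence $\norm{W-U_{A_n}}_\square\to 0$. For the given $\varepsilon>0$, choosing $n$ sufficiently large and setting $A:=A_n$ completes the argument. There is no serious obstacle here; the only nontrivial ingredient is the almost-everywhere convergence of block averages, which is a standard measure-theoretic fact, and everything else is formal.
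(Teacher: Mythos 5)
Your proof is correct and is exactly the argument the paper has in mind: block-averaging $W$ over a fine regular partition to get $U_{A_n}$, using a.e.\ convergence (Lebesgue differentiation / martingale convergence along the dyadic filtration) together with the uniform bound $|U_{A_n}|\leq 1$ to apply dominated convergence and conclude $\norm{W-U_{A_n}}_1\to 0$, then passing to the cut norm via $\norm{\cdot}_\square\leq\norm{\cdot}_1$. No gaps.
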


To prove Theorem~\ref{thm:main}(ii), we shall use some facts about norming graphs, appeared in~\cite{L12}.
\begin{lemma}[\cite{L12},~Exercise~14.8]\label{lem:norming}
Let $H$ be a norming graph. Then $t_H(W)$ is always positive for a nonzero signed graphon $W$.
In particular, $e(H)$ is even, since $t_H(-W)=(-1)^{e(H)}t_H(W)$. 
\end{lemma}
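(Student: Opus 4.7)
The lemma packages two claims: strict positivity of $t_H$ on nonzero signed graphons and evenness of $e(H)$. My approach is a soft continuity-and-connectedness argument for the first, followed by a one-line symmetry observation for the second.

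Since $H$ is norming, $\norm{W}_H = |t_H(W)|^{1/e(H)}$ defines a norm, so nondegeneracy of the norm gives $t_H(W)\neq 0$ for every nonzero signed graphon $W$. Next I will argue that $\W^* := \W\setminus\{0\}$ is path-connected in the cut norm. For any $W_1,W_2\in\W^*$ that are not negative scalar multiples of each other, the straight segment $W_t = (1-t)W_1 + tW_2$ avoids the zero function and is continuous in $\norm{\cdot}_\square$. The remaining antipodal case $W_2 = -\lambda W_1$ with $\lambda>0$ is handled by routing through any third nonzero signed graphon not proportional to $W_1$; since $\W$ is infinite-dimensional, such detour elements (e.g.\ a non-constant two-valued step function) are easy to produce. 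By Lemma~\ref{lem:counting}, $t_H$ is continuous on $\W$ in the cut norm, hence its restriction to $\W^*$ is a continuous, nowhere-vanishing real function on a connected set, and therefore of constant sign. Evaluating at the constant graphon $W\equiv 1$ gives $t_H(W) = 1 > 0$, so $t_H > 0$ throughout $\W^*$.

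For the parity of $e(H)$, pick any nonzero signed graphon $W$ (say $W\equiv 1$); then $-W\in\W^*$ as well, so the first part yields $t_H(W)>0$ and $t_H(-W)>0$. The identity $t_H(-W) = (-1)^{e(H)}t_H(W)$, immediate from the multilinear definition of $t_H$ in the edge weights, then forces $(-1)^{e(H)} = 1$, i.e.\ $e(H)$ is even. I expect no real obstacle beyond bookkeeping; the only slightly delicate point is verifying the detour construction in the antipodal case, which is routine given the infinite dimensionality of $\W$.
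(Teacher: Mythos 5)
The paper does not prove this lemma itself but cites it to Exercise~14.8 of Lov\'asz's book~\cite{L12}, so there is no in-paper argument to compare against. Your proof is correct and self-contained: nondegeneracy of the norm $\norm{\cdot}_H$ forces $t_H$ to be nowhere zero on $\W^{*}$; the two-case path argument (direct segment, or a detour through a third direction in the antipodal case) shows $\W^{*}$ is path-connected; Lipschitz continuity of $t_H$ in the cut norm (Lemma~\ref{lem:counting}, which extends routinely from $[-1,1]$-valued to bounded $W$ after rescaling) then gives constant sign; and $t_H(\mathbf{1})=1$ pins that sign down. The parity of $e(H)$ follows exactly as you say.

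Two small points worth making explicit. First, the nondegeneracy step $\norm{W}_H=0\Rightarrow W=0$ is only meaningful modulo null sets, so ``nonzero signed graphon'' and ``not a negative scalar multiple'' must be read up to a.e.\ equivalence; this is the usual convention and does not affect the argument, but it is the one place the reasoning tacitly passes to equivalence classes. Second, you only need $\dim\W\geq 2$ for the detour, and it is slightly cleaner to run the entire argument inside the convex set of nonzero signed graphons, where Lemma~\ref{lem:counting} applies literally and path-connectedness still holds because that set is a symmetric convex body (of dimension $\geq 2$) with the origin removed. These are cosmetic; the proof is sound.
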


\medskip

We follow the standard notion of convexity and related definitions. A \emph{convex set} is a subset~$C$ of a vector space such that $\lambda x+(1-\lambda)y\in C$ whenver $x,y\in C$ and $\lambda\in(0,1)$.
A function $f:C\rightarrow\mathbb{R}$ is said to be \emph{convex} if, for each $0<\lambda<1,$
$$f(\lambda x+(1-\lambda)y)\leq \lambda f(x)+(1-\lambda)f(y).$$
We say that a function $f$ is \emph{strictly convex} if the inequality above is strict whenever $x$ and $y$ are distinct.
We shall use a simple fact about convexity repeatedly in what follows:
\begin{lemma}\label{lem:composition}
Let $U$ be a convex subset of a vector space and let $f$ be a convex nonnegative function on $U$.
If $g:\mathbb{R}_{\geq 0}\rightarrow\mathbb{R}$ is an increasing convex function, then $g\circ f$ is also convex.
\end{lemma}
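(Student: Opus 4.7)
The plan is to verify the defining inequality of convexity for $g \circ f$ by chaining two monotonicity/convexity steps. Given $x, y \in U$ and $\lambda \in (0,1)$, I would first invoke convexity of $f$ to bound the inner argument:
\begin{align*}
    f(\lambda x + (1-\lambda) y) \leq \lambda f(x) + (1-\lambda) f(y).
\end{align*}
Both sides are nonnegative (the left by hypothesis on $f$; the right as a convex combination of nonnegative numbers), so I can legitimately apply $g$, which is defined on $\mathbb{R}_{\geq 0}$. Because $g$ is increasing on this domain, applying it to both sides preserves the inequality:
\begin{align*}
    g\bigl(f(\lambda x + (1-\lambda) y)\bigr) \leq g\bigl(\lambda f(x) + (1-\lambda) f(y)\bigr).
\end{align*}

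Next I would apply convexity of $g$ to the right-hand side, where $f(x)$ and $f(y)$ are nonnegative reals, to obtain
\begin{align*}
    g\bigl(\lambda f(x) + (1-\lambda) f(y)\bigr) \leq \lambda g(f(x)) + (1-\lambda) g(f(y)).
\end{align*}
Chaining the two inequalities yields $(g\circ f)(\lambda x + (1-\lambda) y) \leq \lambda (g \circ f)(x) + (1-\lambda) (g \circ f)(y)$, which is exactly the convexity of $g \circ f$. There is no real obstacle here; the only subtlety worth flagging is the need for $f \geq 0$ so that the argument fed into $g$ stays in the domain $\mathbb{R}_{\geq 0}$ on which $g$ is assumed to be increasing and convex, and the fact that the first step uses monotonicity (not convexity) of $g$ while the second uses convexity (not monotonicity).
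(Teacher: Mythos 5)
Your proof is correct and follows exactly the same two-step argument as the paper: convexity of $f$ together with monotonicity of $g$ gives the first inequality, and convexity of $g$ gives the second. The only difference is that you spell out the domain check ($f \geq 0$ ensuring the argument of $g$ lies in $\mathbb{R}_{\geq 0}$) which the paper leaves implicit.
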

\begin{proof}
    Let $u,v\in U$. Then for each $\lambda\in (0,1)$.
    \begin{align*}
        g(f(\lambda u+(1-\lambda)v))\leq g(\lambda f(u)+(1-\lambda)f(v))
        \leq \lambda g(f(u))+(1-\lambda)g(f(v)),
    \end{align*}
    where the first inequality uses convexity of $f$ and monotonicity of $g$ and the second uses convexity of $g$.
\end{proof}

For a real-valued function $f(x_1,\cdots,x_n)$ that is twice differentiable on an open set $U\subseteq\mathbb{R}^n$,
the \emph{Hessian} of $f$, denoted by $\nabla^2 f$, is the $n\times n$ matrix $H=(h_{ij})$, where $h_{ij}=\frac{\partial^2 f}{\partial x_i\partial x_j}$. 
We will only consider polynomials $f$, so its Hessian $\nabla^2 f$ is always a symmetric matrix with polynomial-valued entries.
Standard results in convex analysis, e.g., Section 3.1.4 in~\cite{Convex04}, imply the following equivalence.
\begin{lemma}\label{lem:Hessian}
Every $n$-variable polynomial $P$ is convex on a convex set $C\subseteq\mathbb{R}^n$ if and only if its Hessian $\nabla^2 P$ is positive semidefinite on the interior of $C$.
\end{lemma}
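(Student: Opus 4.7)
The plan is to reduce to the classical one-variable fact that a $C^2$ function on an interval is convex if and only if its second derivative is nonnegative on the interior. For any pair of points $x,y\in C$, consider the restriction $g:[0,1]\to\mathbb{R}$ defined by $g(t) := P(x+t(y-x))$. The chain rule yields
\begin{align*}
g''(t) = (y-x)^{\top}\,\nabla^2 P\bigl(x+t(y-x)\bigr)\,(y-x),
\end{align*}
and convexity of $P$ on $C$ is equivalent (upon varying $x$ and $y$) to convexity of every such $g$ on $[0,1]$, so the whole problem reduces to controlling the sign of $g''$.

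For the backward direction, suppose $\nabla^2 P$ is positive semidefinite at every interior point of $C$. If $x,y$ lie in the interior of $C$, convexity of $C$ ensures that the entire open segment between them lies in the interior, so $g''(t)\geq 0$ for all $t\in(0,1)$. Hence $g$ is convex on $[0,1]$, which is precisely the inequality $P((1-\lambda)x+\lambda y)\leq (1-\lambda)P(x)+\lambda P(y)$. For arbitrary $x,y\in C$, approximate by interior points $x_k\to x$ and $y_k\to y$ and pass to the limit using the continuity of $P$.

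For the forward direction, I would argue by contrapositive. Suppose $\nabla^2 P(x_0)$ fails to be positive semidefinite at some interior point $x_0\in C$, and pick $v$ with $v^{\top}\nabla^2 P(x_0)v<0$. Then for sufficiently small $\varepsilon>0$ the points $x_0\pm\varepsilon v$ lie in $C$, and a second-order Taylor expansion of $g(t):=P(x_0+tv)$ around $t=0$ gives
\begin{align*}
\tfrac{1}{2}\bigl(P(x_0+\varepsilon v)+P(x_0-\varepsilon v)\bigr) - P(x_0) = \tfrac{1}{2}\,v^{\top}\nabla^2 P(x_0)v\cdot\varepsilon^2 + o(\varepsilon^2),
\end{align*}
which is strictly negative for small enough $\varepsilon$, contradicting midpoint convexity of $P$.

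There is no real obstacle here, as the statement is essentially the polynomial specialisation of a textbook result in convex analysis; the only mildly delicate point is the boundary approximation in the backward direction, which is immediate from the continuity of polynomials.
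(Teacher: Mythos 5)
The paper does not actually prove this lemma; it simply cites a standard reference (Section~3.1.4 of Boyd--Vandenberghe), so there is no in-paper proof against which to compare. Your argument is the standard textbook proof and is essentially correct: reduce to one-variable restrictions $g(t)=P(x+t(y-x))$, use that $g''(t)=(y-x)^{\top}\nabla^2 P(x+t(y-x))(y-x)$, handle the backward direction on the interior via the one-variable second-derivative criterion and extend to all of $C$ by continuity, and handle the forward direction by a second-order Taylor expansion at an interior point where the Hessian fails to be positive semidefinite. One point you should make explicit is that the continuity/approximation step in the backward direction (and indeed the entire statement) presupposes that $C$ has nonempty interior: if $\operatorname{int}(C)=\emptyset$ the hypothesis is vacuous while the conclusion can fail, so the lemma is only meaningful for full-dimensional $C$. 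This is harmless here, since the paper applies the lemma only to the positive orthant and to $\Sy_n$, both of which have nonempty interior, but it is worth a sentence.
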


\medskip

We also recall a basic fact in functional analysis.
\begin{lemma}\label{lem:ball}
Let $f$ be a nonnegative convex function on a vector space $V$ such that $f(x)=0$ if and only if $x=0$, and $f(\lambda x)=|\lambda|f(x)$. If $B:=\{x\in V:f(x)\leq 1\}$ is convex, then $f$ defines a norm on~$V$.
\end{lemma}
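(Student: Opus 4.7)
The hypotheses already supply three of the four norm axioms: nonnegativity, the kernel condition $f(x)=0\iff x=0$, and absolute homogeneity $f(\lambda x)=|\lambda|f(x)$. So the entire task is to deduce subadditivity $f(x+y)\le f(x)+f(y)$ from what is left, namely the convexity of $f$ (equivalently, the convexity of the unit ball $B$) together with homogeneity.

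The plan is a one-line argument: for any $x,y\in V$, applying homogeneity with $\lambda=1/2$ gives
\[
f(x+y)=2f\!\left(\tfrac{x+y}{2}\right),
\]
and convexity of $f$ at the midpoint bounds this by $2\cdot\tfrac12(f(x)+f(y))=f(x)+f(y)$. If instead one prefers to use the convexity of $B$ itself, I would normalise (assuming $x,y\ne 0$, the other case being immediate by homogeneity) so that $x/f(x)$ and $y/f(y)$ lie in $B$, take the convex combination with weights $f(x)/(f(x)+f(y))$ and $f(y)/(f(x)+f(y))$, and observe that this combination equals $(x+y)/(f(x)+f(y))$; membership in $B$ then rewrites as $f(x+y)\le f(x)+f(y)$ by homogeneity.

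There is no real obstacle here; the statement is exactly the Minkowski functional principle, and the two equivalent routes above both collapse to the standard fact that a positively homogeneous convex function on a vector space is automatically subadditive. The only presentational choice is whether to phrase the triangle inequality via the midpoint convexity of $f$ or via the explicit convex combination inside $B$; I would opt for the former as it is shorter and avoids the case distinction $x=0$ or $y=0$.
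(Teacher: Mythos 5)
Your proposal is correct, and your second route is exactly the paper's proof: normalise $x_1=x/f(x)$ and $y_1=y/f(y)$, take the convex combination with weight $\lambda=f(x)/(f(x)+f(y))$, and read off subadditivity from membership of $(x+y)/(f(x)+f(y))$ in $B$. Your preferred first route is a genuine simplification: since the lemma already assumes $f$ itself is convex (which in fact makes the separate hypothesis that $B$ is convex redundant, sublevel sets of a convex function being automatically convex), one can skip the unit ball entirely and argue at the midpoint, $f(x+y)=2f\bigl(\tfrac{x+y}{2}\bigr)\le f(x)+f(y)$, with no case split for $x=0$ or $y=0$. The paper's version is phrased through $B$ because that is the shape in which convexity is actually verified in Theorems~\ref{thm:equiv} and~\ref{thm:eqv_norming} (one shows the sublevel set is convex), so stating the lemma with $B$ makes the application a verbatim match; but as a standalone statement your midpoint argument is the cleaner proof and both are sound.
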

\begin{proof}
    It is enough to prove the triangle inequality $f(x+y)\leq f(x)+f(y)$.
    For nonzero $x$ and $y$, both $x_1:=x/f(x)$ and $y_1:=y/f(y)$ lie in the convex set $B$.
    Set $\lambda = \frac{f(x)}{f(x)+f(y)}$. Then by convexity, $\lambda x_1+(1-\lambda)y_1\in B$,
    and thus,
    \begin{align*}
        f\left(\frac{x+y}{f(x)+f(y)}\right)=f(\lambda x_1+(1-\lambda)y_1) \leq 1. 
    \end{align*}
    This proves subadditivity of $f$.
\end{proof}

\section{Convexity and weakly norming graphs}

Theorem~\ref{thm:main}(i) is a consequence of the following equivalence.
\begin{theorem}\label{thm:equiv}
Let $H$ be a graph. Then the following are equivalent:
\begin{enumerate}[(i)]
    \item $H$ is weakly norming.
    \item $t_H(\cdot)$ is a convex graphon parameter.
    \item $P_{H,n}(\cdot)$ is a convex polynomial on the positive orthant for every $n\in\mathbb{N}$.
\end{enumerate}
\end{theorem}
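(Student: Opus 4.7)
The plan is to establish the chain (i) $\Rightarrow$ (ii) $\Rightarrow$ (iii) $\Rightarrow$ (ii) $\Rightarrow$ (i), using the identity $P_{H,n}(A) = n^{v(H)} t_H(U_A)$ to shuttle between graphon convexity and polynomial convexity on the space $\Sy_n$.

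For (i) $\Rightarrow$ (ii), since a norm is convex and $|W|=W$ on graphons, $t_H(W)^{1/e(H)}$ is convex on the set of graphons; composing with the convex increasing map $x\mapsto x^{e(H)}$ via Lemma~\ref{lem:composition} gives convexity of $t_H$. For (ii) $\Leftrightarrow$ (iii), linearity of $A\mapsto U_A$ transfers convexity of $t_H$ on graphons directly to convexity of $P_{H,n}$ on symmetric $[0,1]$-matrices, and the homogeneity $P_{H,n}(cA)=c^{e(H)}P_{H,n}(A)$ extends this to the full positive orthant; conversely, approximating two graphons $W_1,W_2$ by block-averaged step functions with entries in $[0,1]$, applying (iii) at the discrete level, and passing to the limit via Lemma~\ref{lem:counting} recovers convexity of $t_H$.

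The heart of the argument is (ii) $\Rightarrow$ (i), which I plan to handle by checking the hypotheses of Lemma~\ref{lem:ball} for $f(W):=\norm{W}_{r(H)}$. Absolute homogeneity is immediate from the degree $e(H)$. For convexity of the unit ball $\{f\le 1\}$, the pointwise bound $|\lambda W_1+(1-\lambda)W_2|\le \lambda|W_1|+(1-\lambda)|W_2|$, together with monotonicity of $t_H$ on nonnegative graphons (clear from its integral representation) and the convexity assumption, does the job. For definiteness, the key step is to deduce Sidorenko's inequality for $H$ from convexity: for a symmetric $n\times n$ matrix $A$, the rearrangements $A_\sigma:=(a_{\sigma(i)\sigma(j)})$ satisfy $t_H(U_{A_\sigma})=t_H(U_A)$ for every $\sigma\in S_n$, so convexity applied to the $S_n$-average gives $t_H\bigl(\mathrm{avg}_\sigma U_{A_\sigma}\bigr)\le t_H(U_A)$. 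When $U_A$ is the block-averaged discretization of a nonnegative graphon $W$, this symmetrized average is a two-valued step function whose diagonal block contribution is $O(1/n)$ and whose off-diagonal value tends to $\int W$; cut-norm continuity (Lemma~\ref{lem:counting}) then yields $\bigl(\int W\bigr)^{e(H)}\le t_H(W)$ in the limit. Hence $t_H(|W|)=0$ forces $\int|W|=0$, so $W=0$ almost everywhere.

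The main obstacle is the Sidorenko-from-convexity step, where care is needed to arrange the partition and symmetrization so that the $S_n$-averaged step function really converges in cut norm to the constant graphon $\int W$ (absorbing the diagonal defect) and so that the convexity inequality survives the passage to the limit through Lemma~\ref{lem:counting}. Every other step reduces to a direct application of the tools assembled in Section~2.
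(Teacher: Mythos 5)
Your proof follows the same overall architecture as the paper's: Lemma~\ref{lem:composition} for (i)$\Rightarrow$(ii), linearity of $A\mapsto U_A$ plus a scaling remark for (ii)$\Rightarrow$(iii), approximation by step functions and the counting lemma for (iii)$\Rightarrow$(ii), and Lemma~\ref{lem:ball} applied to the unit ball of $\norm{\cdot}_{r(H)}$ for (ii)$\Rightarrow$(i). Those steps all check out. The one place where you genuinely add something is the definiteness of $\norm{\cdot}_{r(H)}$: Lemma~\ref{lem:ball} takes ``$f(x)=0\iff x=0$'' as a hypothesis, and the paper invokes the lemma only to obtain the triangle inequality, leaving this hypothesis unverified. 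Your $S_n$-symmetrization argument --- applying Jensen's inequality to $\frac{1}{n!}\sum_\sigma U_{A_\sigma}$, observing that the averaged step function converges in cut norm to the constant graphon $\int W$ as $n\to\infty$, and invoking Lemma~\ref{lem:counting} to pass to the limit --- correctly yields $\bigl(\int W\bigr)^{e(H)}\le t_H(W)$ for nonnegative $W$, which is exactly Szegedy's observation that weakly norming graphs are Sidorenko, rederived from convexity alone. This cleanly supplies the missing positivity (if $t_H(|W|)=0$ then $\int|W|=0$, so $W=0$ a.e.), so your write-up is actually a bit more complete than the paper's in this respect. Everything else is a faithful reproduction of the intended route.
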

\begin{proof}
(i)$\Rightarrow$ (ii). If $\norm{\cdot}_{r(H)}$ is convex, then by Lemma~\ref{lem:composition}, $t_H(W)=\norm{W}_{r(H)}^{e(H)}$ is also convex on the set of graphons.

\medskip

\noindent
(ii)$\Rightarrow$(i). 
Convexity of $t_H(\cdot)$ for graphons naturally extends to all $U,W\in\W$ with nonnegative values. 
Thus, for all $U,W\in\W$ and $\lambda\in(0,1)$, 
\begin{align*}
    t_H(|\lambda W+(1-\lambda)U|)\leq t_H(\lambda |W|+(1-\lambda)|U|)
    \leq \lambda t_H(|W|)+(1-\lambda)t_H(|U|).
\end{align*}
Indeed, $0\leq W'\leq W$ pointwise implies $0\leq t_H(W')\leq t_H(W)$, which gives the first inequality, and the second follows from convexity of $t_H(\cdot)$.
Therefore, the set
\begin{align*}
    B:=\{W\in\W: t_H(|W|)\leq 1\}=\{W\in\W: t_H(|W|)^{1/e(H)}\leq 1\}
\end{align*}
is convex. Lemma~\ref{lem:ball} now proves the triangle inequality for $\norm{\cdot}_{r(H)}$.

\medskip

\noindent
(ii)$\Rightarrow$(iii). Let $A=(a_{ij})$ and $B=(b_{ij})$ be two $n\times n$ symmetric matrices with positive entries. We may assume that $\max a_{ij}\leq 1$ and $\max b_{ij}\leq 1$. Then convexity of $P_{H,n}$ immediately follows from linearity of the map $A\mapsto U_A$ and convexity of $t_H(\cdot)$ for graphons.

\medskip

\noindent
(iii)$\Rightarrow$(ii). Let $W_1$ and $W_2$ be two graphons.
By Lemma~\ref{lem:reg}, there exist $n\times n$ symmetric matrices $A_{1,n}$ and $A_{2,n}$ such that
$\norm{W_i -U_{A_{i,n}}}_{\square}\rightarrow 0$ as $n\rightarrow\infty$ for each $i=1,2$.
Convexity of $P_{H,n}$ gives
\begin{align*}
    t_H(\lambda U_{A_{1,n}}+(1-\lambda) U_{A_{2,n}})
    \leq \lambda t_H(  U_{A_{1,n}})+ (1-\lambda) t_H(U_{A_{2,n}}).
\end{align*}
Letting $n\rightarrow\infty$ finishes the proof, as $t_H(W_n)\rightarrow t_H(W)$ if $\norm{W_n-W}_{\square}\rightarrow 0$ by Lemma~\ref{lem:counting}.

\end{proof}
\begin{remark}
After proving the statement, we found that the equivalence between (i) and (ii) in fact implicitly appeared in Dole\v{z}al et al.~\cite{DGHRR} by a different approach using weak$^*$ limits. We include our shorter proof for the sake of completeness.
\end{remark}

\medskip

In particular, (iii) enables a computational way of verifying weakly norming property, by using Lemma~\ref{lem:Hessian}.
\begin{corollary}\label{cor:psd}
    A graph $H$ is weakly norming if and only if the Hessian $\nabla^2 P_{H,n}(A)$ is positive semidefinite for every $A\in \Sy_n$ with positive entries and $n\in\mathbb{N}$.
\end{corollary}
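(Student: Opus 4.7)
The statement follows by chaining together two equivalences already established in the paper. The plan is to apply Theorem~\ref{thm:equiv} to translate ``weakly norming'' into convexity of the polynomial $P_{H,n}$ on the positive orthant of $\Sy_n$ for every $n$, and then apply Lemma~\ref{lem:Hessian} to translate that convexity condition into positive semidefiniteness of the Hessian. So the proof should essentially just cite these two results in sequence.

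In more detail, I would first invoke the equivalence (i)$\Leftrightarrow$(iii) of Theorem~\ref{thm:equiv}: $H$ is weakly norming iff $P_{H,n}(A)$ is convex on the positive orthant $\{A \in \Sy_n : a_{ij} > 0 \text{ for all } i \leq j\}$ for every $n \in \mathbb{N}$. Here I am implicitly identifying $\Sy_n$ with $\mathbb{R}^{\binom{n+1}{2}}$ via the independent upper-triangular entries, so that $P_{H,n}$ is genuinely a polynomial in $\binom{n+1}{2}$ variables and the framework of Lemma~\ref{lem:Hessian} applies verbatim.

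Next, I would apply Lemma~\ref{lem:Hessian} to the polynomial $P_{H,n}$ on the positive orthant. Since the positive orthant is an open convex subset of $\Sy_n$, it coincides with its own interior, so the lemma yields that convexity of $P_{H,n}$ on the positive orthant is equivalent to $\nabla^2 P_{H,n}(A)$ being positive semidefinite at every $A$ with positive entries. Combining this with the previous step delivers the corollary.

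There is no real obstacle: this is a packaging statement whose purpose is to convert the abstract convexity criterion of Theorem~\ref{thm:equiv} into a concrete, computational test that can later be checked by exhibiting a specific symmetric matrix $A$ with positive entries for which $\nabla^2 P_{H,n}(A)$ has a negative eigenvalue. This is exactly the mechanism by which the paper will refute the weakly norming property for $C_k^{\bowtie}$ and related graphs.
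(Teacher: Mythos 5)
Your proposal is exactly the paper's argument: the corollary follows by combining the equivalence (i)$\Leftrightarrow$(iii) of Theorem~\ref{thm:equiv} with Lemma~\ref{lem:Hessian}, noting that the positive orthant is open so it coincides with its interior. No discrepancy.
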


To prove Theorem~\ref{thm:bowtie}, we need some auxiliary facts about $C_k^{\bowtie}$.
For a vertex subset $X\subseteq V(H)$, let $N^*(X):=N(X)\setminus X$, where $N(X)$ denotes the \emph{union} of all neighbours of $x\in X$.
\begin{lemma}\label{lem:bowtie}
Let $H=C_k^{\bowtie}$ for $k>4$. Then
\begin{enumerate}[(i)]
    \item there is an edge $e$ in $H$ such that $N^*(e)$ induces exactly one edge, i.e., $e$ is contained in exactly one $4$-cycle, and
    \item if $X$ spans exactly two edges, then $N^*(X)$ contains an edge.
\end{enumerate}
\end{lemma}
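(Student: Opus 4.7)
My plan is to verify both parts by direct inspection of the local structure of $C_k^{\bowtie}$, in a convenient coordinate system. Label the vertex of $C_k^{\bowtie}$ coming from the blow-up of vertex $i$ of $C_k$ as $i_1, i_2$, with the index $i$ taken modulo $k$. Then the edges of $H$ split naturally into the \emph{vertical} edges $V_i := \{i_1, i_2\}$ and the \emph{diagonal} edges $D_i^+ := \{i_1, (i+1)_2\}$ and $D_i^- := \{i_2, (i+1)_1\}$. Equivalently, $a_1$ is adjacent to $b_2$ if and only if $a - b \in \{-1, 0, 1\} \pmod k$, and the bipartition is by subscript.

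For part~(i), I take any diagonal edge, say $e = D_i^+$. A direct calculation gives $N^*(e) = \{i_2, (i-1)_2, (i+1)_1, (i+2)_1\}$. Using the adjacency rule, the only pair among these four vertices that forms an edge is $\{i_2, (i+1)_1\} = D_i^-$; the three other candidate pairs between subscripts~$1$ and~$2$ have index differences $2, 3, 2$, none of which lies in $\{-1,0,1\} \pmod k$ because $k > 4$. Hence $e$ is contained in exactly one $4$-cycle.

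For part~(ii), the central observation is that every edge $e$ of $H$ has a \emph{companion edge} sitting inside $N^*(e)$: if $e = V_i$, the companions are $V_{i-1}$ and $V_{i+1}$, while if $e = D_i^\pm$, the unique companion is $D_i^\mp$, which is precisely the edge identified in part~(i). Let $X$ span exactly two edges $e_1, e_2$. Any vertex of $X$ outside $e_1 \cup e_2$ must be non-adjacent to every vertex of $e_1 \cup e_2$ (else a third induced edge would appear), so such vertices lie outside $N(e_1 \cup e_2)$ and therefore $N^*(e_1 \cup e_2) \subseteq N^*(X)$. It is thus enough to show that some companion of $e_1$ is disjoint from $e_2$. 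I conclude by running a short case split according to whether $e_1, e_2$ are vertical or diagonal, and whether $|e_1 \cap e_2| = 1$ (a $P_3$) or $0$ (a matching). In each case, the assumption that $H[X]$ has no third edge forces enough separation between the index sets of $e_1$ and $e_2$ --- for example $|i - j| \geq 3 \pmod k$ when $e_1 = D_i^+$ and $e_2 = D_j^+$ --- to ensure that at least one companion of $e_1$ avoids both endpoints of $e_2$.

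The main obstacle, though not deep, is the bookkeeping in this last step: there are several configurations even after using the rotational and subscript-swap symmetries of $H$, and the two-diagonal subcases are where the hypothesis $k > 4$ must be invoked a second time to rule out wraparound collapsing the companion edge onto $e_2$.
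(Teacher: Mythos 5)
The paper omits the proof of this lemma, stating only that ``it is seen by a straightforward case analysis,'' so there is no in-paper argument to compare against; your proof is the case analysis the authors chose not to write out, and it is essentially correct. Your coordinatisation --- vertical edges $V_i$ and diagonal edges $D_i^\pm$, with the adjacency rule $a_1 \sim b_2 \iff a-b \in \{-1,0,1\} \pmod k$ --- is exactly the right bookkeeping device, and the companion-edge observation ($V_{i\pm 1}$ for $V_i$; $D_i^\mp$ for $D_i^\pm$) cleanly organises part~(ii). The reduction $N^*(e_1\cup e_2) \subseteq N^*(X)$ is also right: vertices of $X$ outside $e_1\cup e_2$ are isolated in $H[X]$, hence not in $N(e_1\cup e_2)$, so the smaller codomain really is contained in $N^*(X)$.

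One small imprecision: you write that it suffices to find a companion of $e_1$ disjoint from $e_2$. As stated this can fail if $e_1$ is diagonal and $e_2$ is vertical --- for instance $e_1 = D_i^+$, $e_2 = V_i$ share $i_1$, and the unique companion $D_i^-$ of $e_1$ meets $V_i$ at $i_2$. In that configuration you must instead use a companion of the vertical edge ($V_{i-1}$ works). The clean way to phrase the reduction is ``some companion of $e_1$ or $e_2$'' or, equivalently, to set up the case split so that whenever exactly one of the two edges is vertical you call it $e_1$. For the remaining cases the claim holds as you stated it: if both are vertical, the two companions of $e_1$ cannot both meet the single edge $e_2$; and if both are diagonal, whenever $D_i^-$ meets $e_2$ one checks that $e_1\cup e_2$ already induces a third edge (e.g.\ $D_i^+, D_{i+1}^+$ induce $V_{i+1}$), so that configuration is excluded by hypothesis. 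With this relabelling remark the argument is complete.
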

Note that $C_3^{\bowtie}\cong K_{3,3}$ and $C_4^{\bowtie}\cong Q_3$ violate (i).
We omit the proof, as it is seen by a straightforward case analysis.

\begin{proof}[Proof of Theorem~\ref{thm:bowtie}]
    Let $H$ be the graph $C_{k}^{\bowtie}$.
    Since $\nabla^2 P_{H,n}(A)$ is a matrix with polynomial entries, its positive semidefiniteness for $A\in\Sy_n$ with positive entries extends to those $A\in\Sy_n$ with nonnegative entries. 
    We analyse a $2\times 2$ submatrix of the Hessian $\nabla^2P_{H,3}(A)$, where
    \begin{align*}
        A=\left[\begin{array}{ccc}
        1 & 1 & 0  \\
        1 & 0 & 1 \\
        0 & 1 & 0 
    \end{array}\right],
    \end{align*}
     with respect to the two variables $a_{13}$ and $a_{33}$. Namely, we write $h(x,y):=P_{H,3}(A_{x,y})$, where
     \begin{align*}
        A_{x,y}:=\left[\begin{array}{ccc}
        1 & 1 & y  \\
        1 & 0 & 1 \\
        y & 1 & x 
    \end{array}\right],
    \end{align*}
    and claim that $\nabla^2 h(x,y)$ is not positive semidefinite at $x=y=0$.
    
    We may decompose $h$ into $h(x,y) = q(x,y)+ \ell(x,y)+r(y)$, where $q(x,y)$ is the sum of all monomials with $x$-degree at least two, $\ell(x,y)$ is the sum of all monomials with $x$-degree one, and~$r(y)$ is the rest only depending on $y$. Then the Hessian $\nabla^2 h(0,0)$ is the matrix
    \begin{align*}
        \left[\begin{array}{ccc}
        q_{xx}(0,0) & \ell_{xy}(0,0) \\
        \ell_{xy}(0,0) & r_{yy}(0)
    \end{array}\right]
    \end{align*}
    and we claim that $q_{xx}(0,0)=0$ and that $\ell_{xy}(0,0)>0$.
    We regard $A_{x,y}$ as a looped, simple, and edge-weighted graph on $\{1,2,3\}$ with the weight $a_{ij}$ for each edge $ij$.
    Then $q(x,y)$ counts the weight on the homomorphisms from $H$ to $A_{x,y}$ that use the $x$-edge at least twice. 
    
    If a homomorphism uses the $x$-edge more than twice, then the corresponding monomial is divisible by $x^3$ and vanishes in $q_{xx}(0,0)$.
    Thus, to compute $q_{xx}(0,0)$, we only count those $H$-homomorphisms which use the $x$-edge exactly twice.
    Suppose that $e_1,e_2\in E(H)$ is mapped to the vertex $3$ with the looped $x$-edge. If a vertex in $N^*(e_1\cup e_2)$ is mapped to the vertex $1$, the homomorphism uses $y$-edge and the corresponding monomial vanishes in $q_{xx}(0,0)$. Otherwise if all the vertices in $N^*(e_1\cup e_2)$ are mapped to the vertex $2$, an edge contained in $N^*(e_1\cup e_2)$, which exists by Lemma~\ref{lem:bowtie}(ii), receives the loop weight 0. Thus, $q_{xx}(0,0)=0$.
    
    It remains to prove $\ell_{xy}(0,0)> 0$. By Lemma~\ref{lem:bowtie}(i), there is an edge  $e$ contained in at most one 4-cycle. Let $e'=uv$ be the edge disjoint from $e$ in the 4-cycle that contains  $e$. Consider the homomorphism that maps an edge $e$ to the $x$-edge, i.e., both ends of $e$ to $3$, exactly one end $u$ of $e'$ to $1$, all vertices in $N^*(e)\setminus\{ u\}$ to $2$, and the other vertices to $1$. Since $N^*(e)\setminus\{ u\}$ is an independent set by the uniqueness of the 4-cycle containing $e$, this is a homomorphism that uses both $x$- and $y$-edge exactly once. Thus, the corresponding monomial is $xy$, which proves that $\ell_{xy}(0,0)\geq 1$.
\end{proof}

\section{Strict convexity and norming graphs}

Theorem~\ref{thm:main}(ii) follows from a result analogous to Theorem~\ref{thm:equiv}.
\begin{theorem}\label{thm:eqv_norming}
Let $H$ be a graph. Then the following are equivalent:
\begin{enumerate}[(i)]
    \item $H$ is norming.
    \item $t_H(\cdot)$ is a strictly convex parameter for signed graphons.
    \item $P_{H,n}(\cdot)$ is a strictly convex polynomial on $\Sy_n$ for every $n\in\mathbb{N}$.
    \item $P_{H,n}(\cdot)^{1/e(H)}$ is a norm on $\Sy_n$.
\end{enumerate}
\end{theorem}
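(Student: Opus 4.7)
The plan is to follow the structure of the proof of Theorem~\ref{thm:equiv}, strengthening ``convex'' to ``strictly convex'' and handling signed graphons in place of nonnegative ones. Throughout, by Lemma~\ref{lem:norming}, any of the four conditions forces $e(H)$ to be even and $t_H(W) > 0$ for every nonzero signed graphon $W$, so that $\norm{W}_H = t_H(W)^{1/e(H)}$ is unambiguously a nonnegative quantity.

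The crux of the proof is a structural lemma, which I would establish first: if $P$ is a homogeneous polynomial of even degree $e$ on a vector space $V$ and $f := P^{1/e}$ is a norm on $V$, then $f$ is automatically strictly convex as a norm, meaning that $f(x+y)=f(x)+f(y)$ forces $x$ and $y$ to be positively proportional. The key idea is to consider the scalar function $t \mapsto f(x+ty)$: it is convex (as a norm of an affine function), lies below the linear function $L(t):=f(x)+tf(y)$ for $t\ge 0$ by the triangle inequality, and coincides with $L$ at $t=0,1$. Since a convex function lying below a linear chord with two equality points must coincide with the chord throughout, the polynomial identity $P(x+ty)=(f(x)+tf(y))^e$ holds on $[0,1]$, and hence on all of $\mathbb{R}$ by polynomial extension. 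Specialising at $t=-f(x)/f(y)$ gives $P(x-(f(x)/f(y))y)=0$, and the norm property of $f$ yields $x=(f(x)/f(y))y$, contradicting non-proportionality.

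Given the structural lemma, I would prove the four implications as follows. For (i)$\Rightarrow$(ii), apply the lemma at the $\W$ level to upgrade the norm property of $\norm{\cdot}_H$ to its strict convexity; strict convexity of $t_H=\norm{\cdot}_H^{e(H)}$ then follows by a case analysis on whether $\norm{W_1}_H=\norm{W_2}_H$, using strict convexity of $x^{e(H)}$ on $[0,\infty)$ in one case and strict convexity of the norm in the other. For (ii)$\Rightarrow$(iii), linearity of the map $A\mapsto U_A$ transports strict convexity of $t_H$ restricted to the step-function subspace to strict convexity of $P_{H,n}$. For (iii)$\Rightarrow$(iv), Lemma~\ref{lem:ball} applied to $f = P_{H,n}^{1/e(H)}$ produces a norm: strict convexity plus even-degree homogeneity yields $P_{H,n}(A)>0$ for $A\ne 0$, and the sublevel set $\{P_{H,n}\le 1\}$ is convex. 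Finally, for (iv)$\Rightarrow$(i), Lemma~\ref{lem:reg} approximates arbitrary signed graphons by step functions $U_A$, and the counting lemma (Lemma~\ref{lem:counting}) transfers the norm axioms from each $\Sy_n$ to $\W$ as $n\to\infty$.

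The main obstacle is the structural lemma itself: general Banach-space examples such as $\ell^1$ show that norms whose $e$-th powers fail strict convexity do exist, so the polynomial structure of $P_{H,n}$ is indispensable in ruling them out. A minor subtlety in (iv)$\Rightarrow$(i) is verifying that $\norm{W}_H = 0 \Rightarrow W = 0$ almost everywhere, which one handles by combining finite-dimensional norm equivalence on $\Sy_n$ with the inequality $\norm{\cdot}_\square\le\norm{\cdot}_1$ to upgrade vanishing of the graph norm to vanishing of the cut norm, and hence of $W$ itself.
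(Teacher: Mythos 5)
Your proposal is correct, and it follows the paper's overall skeleton for (ii)$\Rightarrow$(iii), (iii)$\Rightarrow$(iv), and (iv)$\Rightarrow$(i), but the implication (i)$\Rightarrow$(ii) is handled by a genuinely different argument. The paper proves (i)$\Rightarrow$(ii) by citing Hatami's Clarkson-type inequality (inequality (34) of \cite{H10}), namely $t_H(U+W)+t_H(U-W)\leq 2^{e(H)-1}\bigl(t_H(U)+t_H(W)\bigr)$, and then invoking Lemma~\ref{lem:norming} to get strictness from $t_H(U-W)>0$. You instead isolate a clean structural lemma: a norm whose $e(H)$-th power restricts to a polynomial on every line must have a strictly convex unit ball, because equality in the triangle inequality forces the scalar polynomial $t\mapsto t_H(x+ty)$ to agree with $(f(x)+tf(y))^{e(H)}$ on $[0,1]$, hence everywhere, and the zero at $t=-f(x)/f(y)$ contradicts positivity unless $x$ and $y$ are positively proportional. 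Your lemma is correct (the step ``a convex function lying below a linear chord and touching it at two points must coincide with it between them'' deserves one more line, using a point to the right of $t=1$ to rule out $g<L$ on $(0,1)$, but this is a standard convexity fact), and the subsequent case analysis on $\norm{W_1}_H$ versus $\norm{W_2}_H$ cleanly upgrades strict triangle inequality to strict convexity of $t_H$. The trade-off: your route is self-contained and explains \emph{why} the polynomial structure is what forbids $\ell^1$-type degeneracies, whereas the paper's route is shorter but leans on a specific inequality from \cite{H10}. One small caveat: your footnote for (iv)$\Rightarrow$(i) about verifying $\norm{W}_H=0\Rightarrow W=0$ via ``finite-dimensional norm equivalence plus $\norm{\cdot}_\square\le\norm{\cdot}_1$'' is too vague as stated, since a limit of strictly positive finite-dimensional norms can still vanish; one really needs a direct argument (e.g., that for a semi-norm $\norm{\cdot}_H$ with a null vector $W\ne 0$, the shift invariance $\norm{U_B+W}_H=\norm{U_B}_H$ contradicts (iv) upon approximating $W$ by step functions), but the paper's own (iv)$\Rightarrow$(i) is equally terse on this point.
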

\begin{proof}
    \noindent
    (i)$\Rightarrow$(ii). Let $U,W\in\mathcal{W}$. Hatami proved the following inequality (see (34) in \cite{H10}):
    \begin{align*}
        t_H(U+W)+t_H(U-W) \leq 2^{e(H)-1}(t_H(U)+t_H(W)).
    \end{align*}
    Since $H$ is norming, $t_H(U-W)>0$ unless $U=W$ almost everywhere by Lemma~\ref{lem:norming}. This implies strict convexity of $t_H(\cdot)$.
    
    \medskip
    
    \noindent (ii)$\Rightarrow$(iii). This immediately follows from the linearity of the map $A\mapsto U_A$.
    
    \medskip
    
    \noindent (iii)$\Rightarrow$(iv). 
    If $e(H)$ is odd, then $P_{H,n}(A)+P_{H,n}(-A)=0$ for every $A\in\Sy_n$, which contradicts strict convexity. Thus, $e(H)$ is even. Again by strict convexity, $2P_{H,n}(A)=P_{H,n}(A)+P_{H,n}(-A)>0$ whenever $A\neq 0$. Hence, $P_{H,n}(A)^{1/e(H)}$ is well-defined and positive for every nonzero $A$. Furthermore, $P_{H,n}(\lambda A)^{1/e(H)}=|\lambda| P_{H,n}(A)^{1/e(H)}$. 
     Since  
     \begin{align*}
         B:=\{A\in\Sy_n: P_{H,n}(A)\leq 1\}=\{A\in\Sy_n: P_{H,n}(A)^{1/e(H)}\leq 1\}
     \end{align*}
     is a convex set, we may apply Lemma~\ref{lem:ball} and conclude that $P_{H,n}(A)$ is a norm on $\Sy_n$.
     
    \medskip
    \noindent (iv)$\Rightarrow$(i). The proof is the same as the part (iii)$\Rightarrow$(ii) of Theorem~\ref{thm:equiv}.
\end{proof}
Indeed, positive definiteness of the Hessian implies strict convexity of a polynomial, but the converse is not true in general. 
Thus, the naive analogue of Corollary~\ref{cor:psd} obtained by replacing weakly norming and positive semidefinite by norming and positive definite, respectively, does not hold. One might still hope to prove that a graph $H$ is norming by showing that the Hessian $\nabla^2 P_{H,n}(A)$ is positive definite at each nonzero $A\in\Sy_n$, using the one-sided implication. However, we show that this is impossible by proving that every norming graph has a singular Hessian $\nabla^2 P_{H,n}(A)$ at some $A\neq 0$ whenever $n$ is even.
\begin{proposition}\label{prop:pdhessian}
    For every $n$, There exists a nonzero $2n\times 2n $ symmetric matrix $A$ such that $\nabla^2 P_{H,2n}(A)$ is singular for every norming graph $H$.
\end{proposition}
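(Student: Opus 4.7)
The plan is to exhibit a single nonzero $A \in \Sy_{2n}$ and a single nonzero direction $D \in \Sy_{2n}$, both independent of $H$, such that $D^{\top}\nabla^2 P_{H,2n}(A)\,D = 0$ for every norming graph $H$. Since every norming graph is weakly norming, Theorem~\ref{thm:eqv_norming}(iii) together with Lemma~\ref{lem:Hessian} guarantees that $\nabla^2 P_{H,2n}(A)$ is positive semidefinite on all of $\Sy_{2n}$. Any positive semidefinite quadratic form that vanishes on a nonzero vector must contain that vector in its kernel, so $\nabla^2 P_{H,2n}(A)$ is singular.

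To construct $A$ and $D$, let $J_2$ denote the $2 \times 2$ all-ones matrix and let $M := \left(\begin{smallmatrix}1 & -1 \\ -1 & 1\end{smallmatrix}\right)$. For any nonzero $B, C \in \Sy_n$ set
$$A := B \otimes J_2 \qquad \text{and} \qquad D := C \otimes M.$$
Both are nonzero symmetric $2n \times 2n$ matrices, with $A$ being the ``$2$-blowup'' of $B$. Unpacking the Kronecker products yields
$$U_{A + \epsilon D}(x, y) = U_B(x, y) + \epsilon\,\tau(x)\tau(y)\,U_C(x, y),$$
where $\tau(x) := (-1)^{\lceil 2nx \rceil} \in \{\pm 1\}$ records whether $x$ lies in the left or right half of its $(1/n)$-block, and enjoys the crucial identity $\int_{(i-1)/n}^{i/n} \tau(x)\,dx = 0$ for every $i$.

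The heart of the argument is the vanishing of $[\epsilon^2]\,t_H(U_{A + \epsilon D})$. Expanding the product over $E(H)$ and collecting the quadratic term, this coefficient equals a sum over two-element edge subsets $F \subseteq E(H)$ of integrals of the form
$$\int \Bigl(\prod_{v \in V(H)} \tau(x_v)^{\deg_F(v)}\Bigr)\Bigl(\prod_{uv \in F} U_C(x_u, x_v)\Bigr)\Bigl(\prod_{uv \notin F} U_B(x_u, x_v)\Bigr) d\mu^{v(H)}.$$
Whether $F$ is a two-edge path or a two-edge matching, at least one vertex $v$ satisfies $\deg_F(v) = 1$, so $\tau(x_v)$ appears to the first power. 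The $U_B, U_C$ factors involving $x_v$ depend on $x_v$ only through $\lceil n x_v\rceil$, hence are constant on each $[(i-1)/n, i/n)$. Fubini applied to the $x_v$-integral therefore kills each such $F$-term by the vanishing identity for $\tau$, so $[\epsilon^2]\,t_H(U_{A + \epsilon D}) = 0$. Translating via $P_{H,2n}(A + \epsilon D) = (2n)^{v(H)}\,t_H(U_{A + \epsilon D})$ converts this into $D^{\top}\nabla^2 P_{H,2n}(A)\,D = 0$, and the PSD step above places $D$ in $\ker \nabla^2 P_{H,2n}(A)$.

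I do not anticipate a serious obstacle. The parity observation (any $2$-edge subgraph has at least one vertex of odd $F$-degree) and the block-averaging identity $\int \tau = 0$ on each $(1/n)$-interval are both immediate; the only real care is in the Kronecker-product bookkeeping when identifying $U_{A + \epsilon D}$.
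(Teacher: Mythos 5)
Your argument is correct, and it reaches the same conclusion by a genuinely different construction. The paper takes a single very special $A=\left[\begin{smallmatrix}J_n & -J_n\\ -J_n & J_n\end{smallmatrix}\right]$, whose associated step function $U_A(x,y)=s(x)s(y)$ makes $t_F(U_A)$ the eulerian indicator, and perturbs in the constant direction $D=J_{2n}$; the vanishing of the $\varepsilon^2$ term then follows because every norming graph is eulerian and deleting two edges from an eulerian graph always destroys eulerian-ness. You instead take $A=B\otimes J_2$ for \emph{arbitrary} nonzero $B\in\Sy_n$ (so the special structure is moved from $A$ to the direction) and perturb along $D=C\otimes M$, relying on the sign pattern $\tau(x)=(-1)^{\lceil 2nx\rceil}$ integrating to zero on each block; here the vanishing of the $\varepsilon^2$ coefficient is a direct parity statement — every two-edge set $F\subseteq E(H)$ has a vertex of odd $F$-degree — and holds for \emph{every} graph $H$, with the eulerian property entering only implicitly through the PSD step. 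So the two arguments are dual: the paper uses a special $A$ and the all-ones $D$, you use a special $D$ and a whole family of $A$'s. Your version is slightly more flexible (it yields a $\dim\Sy_n$-dimensional family of null directions) and needs one fewer external fact (the eulerian-indicator identity for $t_F(U_A)$), while the paper's is shorter to state once that fact is granted. One small stylistic point: since Theorem~\ref{thm:eqv_norming}(iii) is itself proved via (i)$\Rightarrow$(ii)$\Rightarrow$(iii), citing Corollary~\ref{cor:psd_norming} directly for the PSD claim would be cleaner, but the logic as you give it is sound.
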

\begin{proof}
    Let $A=\left[\begin{array}{cc}
        J_n & -J_n\\ -J_n & J_n 
    \end{array}\right]$, where $J_n$ denotes the $n\times n$ matrix with all entries equal to 1. We claim that $\nabla^2P_{H,2n}(A)$ has eigenvalue 0 with the eigenvector $1_n=(1,1,\cdots,1)^T\in \mathbb{R}^{n(2n+1)}$. 
    Recall the folklore fact~\cite[Example~5.14]{L12} that $t_F(U_A)$ is the indicator function that $F$ is eulerian. In particular, $H$ is eulerian and $e(H)$ is even.
    Thus,
    \begin{align*}
        t_H(U_A-\varepsilon)+t_H(U_A+\varepsilon) = 2t_H(U_A) + 2\sum_{J}t_J(U_A)\varepsilon^{e(H)-e(J)}
        = 2 + 2\varepsilon^2\sum_{F}t_F(U_A) +O(\varepsilon^4),
    \end{align*}
    where the first sum is taken over all proper subgraphs $J$ of $H$ with even number of edges and the second is taken over all subgraphs $F\subseteq H$ with $e(F)=e(H)-2$. Since one always obtains a non-eulerian subgraph $F$ by deleting two edges from an eulerian graph $H$, $t_{F}(U_A)=0$. Thus, $t_H(U_A-\varepsilon)+t_H(U_A+\varepsilon)=2+O(\varepsilon^4)$.
    On the other hand, by the Taylor expansions of $P_{H,2n}$ at $A$,
    \begin{align*}
        P_{H,2n}(A+\varepsilon J_{2n})+P_{H,2n}(A-\varepsilon J_{2n}) = P_{H,2n}(A)+ 2\varepsilon^2 1_n^T\nabla^2 P_{H,2n}(A)1_n +O(\varepsilon^3).
    \end{align*}
    Since $P_{H,2n}(A+\varepsilon J_{2n})+P_{H,2n}(A-\varepsilon J_{2n})=(2n)^{v(H)}(t_H(U_A-\varepsilon)+t_H(U_A+\varepsilon))$, it follows that $1_n^T\nabla^2 P_{H,2n}(A)1_n=0$. Since $\nabla^2 P_{H,2n}(A)$ is positive semidefinite, $\nabla^2 P_{H,2n}(A)1_n$ must be zero. This completes the proof of the claim.
\end{proof}

As already used in the last line of the proof, we are only able to obtain a weaker analogue of Corollary~\ref{cor:psd}. 
\begin{corollary}\label{cor:psd_norming}
    For a norming graph $H$, $\nabla^2 P_{H,n}(A)$ is positive semidefinite for every~$A\in \Sy_n$.
\end{corollary}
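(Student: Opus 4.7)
The plan is to derive this directly from the equivalences already established in Theorem~\ref{thm:eqv_norming} together with the Hessian characterisation of convexity in Lemma~\ref{lem:Hessian}. The key observation is that although strict convexity of a polynomial does not force its Hessian to be strictly positive definite everywhere, ordinary convexity \emph{does} force positive semidefiniteness of the Hessian on the interior of the domain, and the ``domain'' $\Sy_n$ here is an entire vector space, so its interior is all of $\Sy_n$.

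First I would invoke Theorem~\ref{thm:eqv_norming}, specifically the implication (i)$\Rightarrow$(iii), to conclude that for every $n\in\mathbb{N}$ the polynomial $P_{H,n}$ is strictly convex on $\Sy_n$; in particular it is convex on the convex set $\Sy_n$. Next I would apply Lemma~\ref{lem:Hessian}, which says that convexity of a polynomial on a convex set $C$ is equivalent to positive semidefiniteness of its Hessian on the interior of $C$. Taking $C=\Sy_n$, the interior of $\Sy_n$ (viewed as a subset of the ambient space $\Sy_n\cong\mathbb{R}^{n(n+1)/2}$) coincides with $\Sy_n$ itself. Hence $\nabla^2 P_{H,n}(A)$ is positive semidefinite at every $A\in\Sy_n$.

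The only small subtlety is to confirm that the ``interior'' caveat in Lemma~\ref{lem:Hessian} is not an obstacle; once one realises that $\Sy_n$ is an open subset of itself as a vector space, the statement is immediate. I do not anticipate any genuine difficulty: the corollary is really a packaging result that makes explicit one direction of the Hessian/convexity correspondence in the present setting. Its role in the paper is chiefly to set up the contrast with Proposition~\ref{prop:pdhessian}, which shows that the natural strengthening to positive \emph{definiteness} must fail, so there is no scope to tighten the conclusion here.
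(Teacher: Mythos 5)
Your argument is exactly the route the paper intends: Theorem~\ref{thm:eqv_norming}(i)$\Rightarrow$(iii) gives (strict, hence ordinary) convexity of $P_{H,n}$ on all of $\Sy_n$, and Lemma~\ref{lem:Hessian} then yields positive semidefiniteness of the Hessian everywhere, with no boundary subtlety since $\Sy_n$ is the whole ambient space. The paper leaves this unproved as an immediate consequence (mirroring Corollary~\ref{cor:psd}), and your write-up supplies precisely that missing one-line justification.
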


It is still enough to find $A\in \Sy_n$ such that $\nabla^2 P_{H,n}(A)$ is not positive semidefinite to prove that~$H$ is not norming. This is exactly what we do in the proof of Theorem~\ref{thm:K55}.

\begin{proof}[Proof of Theorem~\ref{thm:K55}]
     Let $H_t$ be the graph $K_{2t+1,2t+1}\setminus (2t+1)\cdot K_2$. As mentioned before, it is enough to prove that $H_t$ is not norming, as $K_{2t,2t}$ minus a perfect matching is not eulerian and thus not norming.
     Let
    \begin{align*}
        A=\left[\begin{array}{ccc}
        x & y & \varepsilon  \\
        y & 1 & 1 \\
        \varepsilon & 1 & -1 
    \end{array}\right]
    \end{align*}
    and let $h(x,y):=P_{H,3}(A)$. Here we suppress the dependency on $0<\varepsilon<1$, since $\varepsilon$ is a small constant to be chosen later.
    We analyse the $2\times 2$ Hessian matrix $\nabla^2 h$ at $(0,0)$.
    As in the proof of Theorem~\ref{thm:bowtie},
    we decompose $h(x,y)$ into three parts, i.e., 
    \begin{align*}
        h(x,y) = q(x,y) + \ell(x,y) + r(y), 
    \end{align*}
    where $q(x,y)$ is the sum of monomials divisible by $x^2$, $\ell$ is the sum of monomials whose $x$-degree is~$1$, and $r$ is the remaining terms. 
    Then the Hessian $\nabla^2 h(0,0)$ is the matrix
    \begin{align*}
        \left[\begin{array}{ccc}
        q_{xx}(0,0) & \ell_{xy}(0,0) \\
        \ell_{xy}(0,0) & r_{yy}(0)
    \end{array}\right].
    \end{align*}
    We regard $A$ as a looped, simple, and edge-weighted graph on $\{1,2,3\}$ with the weight $a_{ij}$ for each edge $ij$. For the same reason as in the proof of Theorem~\ref{thm:bowtie}, $q_{xx}(0,0)$ is equal to the number of homomorphisms that use the $x$-edge exactly twice without using the $y$-edge. 
    Such a homomorphism~$\phi$ maps at least three vertices $V_1$ in $H_t$ that induce exactly two edges to the vertex $1$ and never maps their neighbour to the vertex~$2$. Thus, $N^*(V_1)$ must be embedded to the vertex $3$. Since  $H_t$ is $2t$-regular and $V_1$ contains exactly two edges, $e(V_1,N^*(V_1))\geq 6t-4$ and thus, $\phi$ uses the $\varepsilon$-edge at least $6t-4$ times. 
    
    Analogously, $\ell_{xy}(0,0)$ counts the number of homomorphisms that use both the $x$- and $y$-edges exactly once and hence, use the $\varepsilon$-edge at least $4t-3$ times. The homomorphisms using the $y$-edge exactly twice and avoiding the $x$-edge must use $\varepsilon$-edge at least $2t-2$ times. Therefore,
    \begin{align*}
       \nabla^2 h(0,0)=
       \left[\begin{array}{ccc}
        q_{xx}(0,0) & \ell_{xy}(0,0) \\
        \ell_{xy}(0,0) & r_{yy}(0)
    \end{array}\right]=
       \left[\begin{array}{ccc}
        O(\varepsilon^{6t-4}) & O(\varepsilon^{4t-3}) \\
        O(\varepsilon^{4t-3}) & O(\varepsilon^{2t-2})
    \end{array}\right].
    \end{align*}
    Here $O(\cdot)$ notation includes implicit multiplicative constants depending only on $t$.
    
    Unfortunately, the product of the diagonal entries and the product of the off-diagonal entries are in the same order $O(\varepsilon^{8t-6})$.
    However, we claim that $r_{yy}(0)$ is asymptotically smaller than $O(\varepsilon^{2t-2})$ and also that $|\ell_{xy}(0,0)|=\Omega(\varepsilon^{4t-3})$, which implies that $\nabla^2 h(0,0)$ is not positive semidefinite for a sufficiently small $\varepsilon>0$.
    
    Let~$A\cup B$ be the bipartition of~$H_t$ and let~$A=\{a_1,\cdots,a_{2t+1}\}$ and~$B=\{b_1,\cdots,b_{2t+1}\}$ such that~$a_ib_i$, $1\leq i\leq 2t+1$, is the missing perfect matching in~$H_t$.
    Firstly, let $\Phi_{yy}$ be the set of homomorphisms that use the~$y$-edge twice and the~$\varepsilon$-edge exactly~$2t-2$ times while avoiding the~$x$-edge.
    Each~$\varphi\in \Phi_{yy}$ must map one vertex, say~$a_1$, to~$1$, two neighbours of~$a_1$ to~$2$, and the other~$2t-2$ neighbours of~$a_1$ to~$3$.
    That is, once we choose the vertex~$a_1$ and two of its neighbours to be mapped to~$2$, all the embeddings of the neighbours of~$a_1$ are fixed. Consider these vertices as pre-embedded.
    Let~$V_3$ be the set of~$2t-2$ vertices mapped to~$3$ and let~$U$ be the vertices that are not yet embedded. Then~$U=\{a_2,\cdots,a_{2t+1}\}\cup\{b_1\}$.
    In particular, $U$ induces a star centred at~$b_1$ with~$2t$ edges. Also note that by the definition of~$\Phi_{yy}$, the homomorphisms in~$\Phi_{yy}$ do not map any other vertex than~$a_1$ to~$1$.
    For each~$\varphi\in\Phi_{yy}$, denote by $U_{\varphi}$ the subset of~$U$ mapped to the vertex~$3$. Then the coefficient of the term $\varepsilon^{2t-2}y^{2}$ in~$r(y)$ is determined by
    \begin{align}\label{eq:phiweight}
        \sum_{\varphi\in\Phi_{yy}} (-1)^{e(V_3,U_{\varphi})+e(V_3)+e(U_{\varphi})}.
    \end{align}
    Suppose $b_1\in U_{\varphi}$. For each $\varphi$, let $b_i$ and $b_j$, $i<j$, be the two vertices mapped to the vertex $2$. Then both $a_i$ and $a_j$ have all their $2t-1$ other neighbours than $b_i$ and $b_j$ mapped to $3$. Thus, by switching the image of $a_i$ under $\varphi$ between $2$ and $3$, we produce another homomorphism $\overbar{\varphi}$ whose weight $(-1)^{e(V_3,U_{\overbar{\varphi}})+e(V_3)+e(U_{\overbar{\varphi}})}$ has exactly the opposite sign of that of $\varphi$. This switching is an involution, and thus, the two terms pair up and cancel each other in~\eqref{eq:phiweight}.
    If $b_1\notin U_{\varphi}$, then one may do an analogous switching with the minimum indexed vertex amongst $a_2,\cdots,a_{2t+1}$ that has an odd degree to those vertices mapped to $3$. Thus, \eqref{eq:phiweight} evaluates to zero.
    
    To prove $|\ell_{xy}(0,0)|=\Omega(\varepsilon^{4t-3})$, let $\Psi_{xy}$ be the set of homomorphisms that use each of the $x$- and $y$-edge exactly once. Suppose that, under $\psi\in\Psi_{xy}$, $a_i$ and $b_{j}$, $i,j>1$ and $i\neq j$, are mapped to $1$ and $b_{k}$, $i\neq k>1$, is  mapped to $2$. To avoid using the $y$-edge more than once, $\psi$ must map $(B\setminus\{b_i,b_j,b_k\})\cup (A\setminus\{a_i,a_j\})$ to the vertex $3$. Thus, there are only two vertices $a_j$ and $b_i$ whose embedding is not yet determined.
    Note that $a_j$ and $b_i$ have $2t-2$ and $2t-1$ neighbours mapped to $3$, respectively, and they are adjacent. Let $\alpha_{\psi}$ and $\beta_\psi$ be the indicator function that $a_j$ and $b_i$ are mapped to $3$ by $\psi$, respectively. Then the coefficient of the term $\varepsilon^{4t-3}xy$ in $\ell(x,y)$ is a nonzero constant times
    \begin{align*}
        \sum_{\psi\in\Psi_{xy}}(-1)^{(2t-2)\alpha_\psi+(2t-1)\beta_\psi+\alpha_\psi\beta_\psi}=\sum_{\psi\in\Psi_{xy}}(-1)^{\beta_\psi+\alpha_\psi\beta_\psi}.
    \end{align*}
    Since each choice $(\alpha_\psi,\beta_\psi)\in\{0,1\}^2$ determines a homomorphism $\psi\in\Psi_{xy}$, $(\alpha_\psi,\beta_\psi)$ is uniformly distributed on $\{0,1\}^2$. Hence, the sum above evaluates to a nonzero constant, which proves the claim.
\end{proof}

\section{Concluding remarks}
Our method using the Hessian matrix $\nabla^2 P_{H,n}$ is reminiscent of~\cite{KMPW19} in the sense that both rely on determining positive semidefiniteness of matrices given by homomorphism counts. More precisely, in~\cite{KMPW19} they looked at two edges $e$ and $e'$ in a graph $G$ sharing a vertex and used non-positive semidefiniteness of the $2\times 2$ matrix 
\begin{align*}
        A_{e,e'}=\left[\begin{array}{ccc}
        h_{e,e} & h_{e,e'} \\
        h_{e,e'} & h_{e',e'}
    \end{array}\right],
\end{align*}
where $h_{e_1,e_2}$ counts the number of those homomorphisms from $H$ to $G$ which map a $K_{1,2}$ in $H$ to the homomorphic copy of $K_{1,2}$'s formed by $e_1$ and $e_2$,
to prove that a certain $H$ is not weakly norming. 

This is somewhat analogous to the Hessian matrix obtained by evaluating the corresponding weights of $e$ and $e'$ to be zero. However, the Hessian does not take the particular $K_{1,2}$-structure into account, so it has larger entries than $A_{e,e'}$ above. 
We did not attempt to reprove their result using our language, but we remark that there are non-weakly norming graphs that satisfy their positive semidefiniteness condition. For instance, take a vertex-disjoint union of two non-isomorphic connected weakly norming graphs. This is proven to be not weakly norming in~\cite{GHL19}, but the corresponding $2\times 2$ matrix in~\cite{KMPW19} is positive semidefinite, since it is a positive linear combination of the respective matrices of the components. It would be interesting to see if the two distinct positive semidefiniteness conditions are equivalent for connected graphs $H$.

\vspace{5mm}

\noindent
\textbf{Acknowledgements.} We would like to thank Sasha Sidorenko for suggesting us to prove Theorem~\ref{thm:K55} and Jan Hladk\'{y} for explaining his result~\cite{DGHRR} with colleagues obtaining part of Theorem~\ref{thm:main}. We are also grateful to David Conlon, Christian Reiher, and Mathias Schacht for helpful comments and discussions.

\bibliographystyle{abbrv}
\bibliography{references}

\begin{thebibliography}{10}

\bibitem{Convex04}
S.~Boyd and L.~Vandenberghe.
\newblock {\em Convex optimization}.
\newblock Cambridge University Press, 2004.

\bibitem{CFS10}
D.~Conlon, J.~Fox, and B.~Sudakov.
\newblock An approximate version of {S}idorenko's conjecture.
\newblock {\em Geom. Funct. Anal.}, 20(6):1354--1366, 2010.

\bibitem{CKLL15}
D.~Conlon, J.~H. Kim, C.~Lee, and J.~Lee.
\newblock Some advances on {S}idorenko's conjecture.
\newblock {\em J. London Math. Soc.}, 98(3):593--608, 2018.

\bibitem{CL16}
D.~Conlon and J.~Lee.
\newblock Finite reflection groups and graph norms.
\newblock {\em Adv. Math.}, 315:130--165, 2017.

\bibitem{CL18}
D.~Conlon and J.~Lee.
\newblock Sidorenko's conjecture for blow-ups.
\newblock arXiv:1809.01259, 2018.

\bibitem{DGHRR}
M.~Dole{\v z}al, J.~Greb{\'i}k, J.~Hladk{\'y}, I.~Rocha, and V.~Rozho{\v n}.
\newblock {Cut distance identifying graphon parameters over weak* limits}.
\newblock arXiv:1809.03797v3.

\bibitem{ESi83}
P.~Erd{\H{o}}s and M.~Simonovits.
\newblock Supersaturated graphs and hypergraphs.
\newblock {\em Combinatorica}, 3(2):181--192, 1983.

\bibitem{GHL19}
F.~Garbe, J.~Hladk\'y, and J.~Lee.
\newblock Two remarks on graph norms.
\newblock arXiv:1909.10987, 2019.

\bibitem{H10}
H.~Hatami.
\newblock Graph norms and {S}idorenko's conjecture.
\newblock {\em Israel J. Math.}, 175(1):125--150, 2010.

\bibitem{KLL14}
J.~H. Kim, C.~Lee, and J.~Lee.
\newblock Two approaches to {S}idorenko's conjecture.
\newblock {\em Trans. Amer. Math. Soc.}, 368(7):5057–--5074, 2016.

\bibitem{KMPW19}
D.~Kr\'al', T.~Martins, P.~P. Pach, and M.~Wrochna.
\newblock {The step Sidorenko property and non-norming edge-transitive graphs}.
\newblock {\em J. Combin. Theory Ser. A}, 162:34--54, 2019.

\bibitem{LSz12}
J.~Li and B.~Szegedy.
\newblock On the logarithmic calculus and {S}idorenko's conjecture.
\newblock To appear in Combinatorica.

\bibitem{L08}
L.~Lov{\'a}sz.
\newblock Graph homomorphisms: Open problems.
\newblock manuscript available at http://www. cs. elte. hu/lovasz/problems.
  pdf, 2008.

\bibitem{L12}
L.~Lov{\'a}sz.
\newblock {\em Large Networks and Graph Limits}.
\newblock American Mathematical Society colloquium publications. American
  Mathematical Society, 2012.

\bibitem{LSz06}
L.~Lov\'asz and B.~Szegedy.
\newblock Limits of dense graph sequences.
\newblock {\em J. Combin. Theory Ser. B}, 96(6):933 -- 957, 2006.

\bibitem{Sid93}
A.~Sidorenko.
\newblock A correlation inequality for bipartite graphs.
\newblock {\em Graphs Combin.}, 9(2-4):201--204, 1993.

\bibitem{Sid92}
A.~Sidorenko.
\newblock Inequalities for functionals generated by bipartite graphs.
\newblock {\em Discrete Math. Appl.}, 2:489--–504, 1993.

\bibitem{Sz15}
B.~Szegedy.
\newblock An information theoretic approach to {S}idorenko's conjecture.
\newblock arXiv:1406.6738, 2014.

\end{thebibliography}

\end{document}